\newcounter{num}[section]
\newenvironment{theorem}
{\refstepcounter{num}%
\bigskip\noindent\nopagebreak[4]{\bf Theorem~\arabic{section}.\arabic{num}. }\it}
\newenvironment{corollary}
{\refstepcounter{num}%
\bigskip\noindent\nopagebreak[4]{\bf Corollary~\arabic{section}.\arabic{num}. }\it}
\newenvironment{lemma}
{\refstepcounter{num}%
\bigskip\noindent\nopagebreak[4]{\bf Lemma~\arabic{section}.\arabic{num}. }\it}
\newenvironment{remark}
{\refstepcounter{num}%
\bigskip\noindent\nopagebreak[4]{\bf Remark~\arabic{section}.\arabic{num}. }}
\newenvironment{example}
{\refstepcounter{num}%
\bigskip\noindent\nopagebreak[4]{\bf Example~\arabic{section}.\arabic{num}. }}
\newcommand{\Ss}{{\mathbf{S}}}
\newcommand{\V}{{\mathrm{V}}}
\newcommand{\lb}{{\langle}}
\newcommand{\rb}{{\rangle}}
\newcommand{\pr}{{\prime}}
\renewcommand{\t}{{\tau}}
\newcommand{\s}{{\sigma}}
\newcommand{\eps}{{\varepsilon}}
\newcommand{\al}{{\alpha}}
\newcommand{\F}{{\mathcal{F}}}
\newcommand{\I}{{\mathcal{I}}}
\renewcommand{\c}{{\mathbf{c}}}
\renewcommand{\d}{{\mathbf{d}}}
\renewcommand{\a}{{\mathbf{a}}}
\renewcommand{\b}{{\mathbf{b}}}
\newcommand{\e}{{\mathbf{e}}}
\newcommand{\f}{{\mathbf{f}}}
\newcommand{\ucl}{{\mathrm{Ucl}}}
\newcommand{\Rad}{{\mathrm{Rad}}}
\newcommand{\Hom}{{\mathrm{Hom}}}
\newcommand{\Core}{{\mathrm{Core}}}
\begin{document}

\author{Artem N. Shevlyakov}
\title{Algebraic geometry over free semilattice}

\maketitle

\section{Introduction}

In the papers~\cite{uniTh_I,uniTh_II} it was developed universal algebraic geometry over algebraic structures. In such papers it was introduced the definition of coordinate algebra (the analog of coordinate ring in commutative algebra) and explained that the classification of algebraic sets can be reduced to the classification of coordinate algebras.  The methods of these papers were applied to semigroups~\cite{AG_over_N_I,AG_over_N_II,AG_over_N_III}, and semilattices~\cite{at_service}. 

In the current paper we shall consider equations over semilattices, and any equation is an equality
\[
x_{i_1}x_{i_2}\ldots x_{i_n}\a=x_{j_1}x_{j_2}\ldots x_{j_m}\b,
\] 
where $\a,\b$ are elements of a semilattice.

Following the results of~\cite{uniTh_II}, the simplest description of coordinate algebras over an semilattice $S$ can be obtained if $S$ is equationally Noetherian. Remind that a semilattice $S$ is equationally Noetherian if any system of equations is equivalent over $S$ to its finite subsystem. Notice that we always consider systems of equations depending on at most finite set of variables.

In~\cite{at_service} we proved the criterion for an arbitrary semilattice to be equationally Noetherian. Following this result, any infinite semilattice is not equationally Noetherian.

Thus, we have to weaken the Noetherian property: a semilattice $S$ has the Noetherian property for consistent systems (NPC) if any consistent system of equations is equivalent over $S$ to its finite subsystem.  
In~\cite{at_service} there defined a semilattice $S$ with NPC, whereas $S$ is not equationally Noetherian Equivalently, there exists an infinite inconsistent system $\Ss$ over $S$ such that any finite subsystem of $\Ss$ is consistent. This fact does not contradict Malcev Compactness theorem. Indeed, Compactness theorem states that an infinite set $\Sigma$ of first-order sentences is satisfiable iff any finite subset $\Sigma_0\subseteq\Sigma$ is satisfiable. Notice that $\Sigma$ and all $\Sigma_0$ may be realized in different models, while NPC should be checked for systems of equations over the same semilattice $S$. 

Actually there are infinite semilattices with NPC. More precisely, it will be proven that any semilattice embeddable into a free semilattice of infinite rank has such property (Theorem~\ref{th:F_has_Noeth_prop}).

By Theorem~\ref{th:F_has_Noeth_prop} the free semigroup $\F$ of an infinite rank has NPC. It allows us apply almost all methods of the papers~\cite{uniTh_I,uniTh_II} to establish algebraic geometry over $\F$. In the current paper we obtain the description of coordinate semilattices of irreducible algebraic sets over $\F$. More precisely, we classify the coordinate semilattices using two approaches. Firstly, we define a universal semilattice $S$ such that any coordinate semilattice of an irreducible algebraic set over $\F$ is embedded into $S$ (the second statement of Theorem~\ref{th:main_irred}). Secondly, we write the list of axioms which logically define the class of coordinate semilattices of irreducible algebraic sets over $\F$ (the third statement of Theorem~\ref{th:main_irred}).

Theorem~\ref{th:main_irred} also allows us to 
\begin{enumerate}
\item define an algorithm which decomposes any algebraic set into a union of irreducible algebraic sets (Section~\ref{sec:algorithm});
\item obtain the parameterization of any algebraic set using the free parameters (Theorem~\ref{th:parameterization_of_arbitrary_set});
\end{enumerate}
Moreover, in Theorem~\ref{th:consistency} we solve the consistency problem of finite systems over $\F$.

\section{Semilattices and equations}

A {\it semilattice} is a commutative idempotent semigroup. We denote the multiplication in a semilattice by $\cdot$. By $\F$ we shall denote the free semilattice with free generators $\{\a_i|i\in\I\}$. The cardinality of the set $\I$ is the~\textit{rank} of the free semilattice $\F$. 

Indeed, any element $\a\in\F$ has a unique normal form
\[
\a=\a_{i_1}\a_{i_2}\ldots\a_{i_n},
\] 
where $i_1<i_2<\ldots<i_n$.

One can define a partial order for elements $\b,\c\in\F$ by
\[
\b\leq\c\Leftrightarrow \b\c=\b.
\]
The relation $\b\leq\c$ means that all free generators $\a_i$ occurring in the normal form of $\c$ should occur in $\b$. For example, $\a_1\a_2\a_3\leq\a_1\a_3$.

By $\F^\ast$ we denote the free semigroup $\F$ with the maximal element $1$ adjoined (i.e. $x\cdot 1=1\cdot x=x$ for all $x\in\F^\ast$).

Two elements $\b,\c\in\F$ are \textit{co-prime} if 
\[
\b=\prod_{i\in\I_\b}\a_i,\;\c=\prod_{i\in\I_\c}\a_i,
\]
and $\I_\b\cap\I_\c=\emptyset$. Or equivalently, elements $\b,\c\in\F$ are co-prime if there does not exists $\d\in\F$ with $\b\leq\d$, $\c\leq\d$. For example, the elements $\a_1\a_3$ and $\a_2\a_4$ are co-prime.

The following statement is well-known in semilattice theory.

\begin{theorem}
\label{th:embedd}
Any finitely generated semilattice is finite, and moreover it is embedded into a free semilattice of a  finite rank.
\end{theorem}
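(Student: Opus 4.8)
\textbf{Proof plan.}
The plan is to prove both assertions at once by working with the concrete model $\F$ of a free semilattice. First I would recall that a free semilattice on a set of generators can be realized as the set of finite nonempty subsets of the generating set under the operation of union; equivalently, as the normal forms $\a_{i_1}\cdots\a_{i_n}$ with $i_1<\dots<i_n$ described above, with multiplication given by taking the union of the index sets. This description makes the universal property transparent: given any semilattice $S$ and any map $f_0$ from the generators into $S$, the extension $f$ to the free semilattice is $f(\a_{i_1}\cdots\a_{i_n}) = f_0(\a_{i_1})\cdots f_0(\a_{i_n})$, and commutativity and idempotency of $S$ guarantee this is well defined and a homomorphism.

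Next, let $T$ be a semilattice generated by a finite set $\{t_1,\dots,t_k\}$. Every element of $T$ is a product of some of the $t_i$'s; by commutativity and idempotency such a product is determined by the \emph{set} of indices appearing in it. Hence $T$ has at most $2^k-1$ elements, which proves finiteness. (One could alternatively phrase this via the surjective homomorphism from the free semilattice of rank $k$ onto $T$ induced by $\a_i\mapsto t_i$, whose domain, by the subset model, has $2^k-1$ elements.)

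For the embedding statement I would use the standard representation of a finite semilattice by principal downsets. For each $t\in T$ set $D_t=\{s\in T: s\leq t\}=\{s\in T: st=s\}$. A routine check shows $D_{st}=D_s\cap D_t$, so $t\mapsto D_t$ is a semilattice homomorphism from $(T,\cdot)$ into $(\mathcal{P}(T)\setminus\{\emptyset\},\cap)$, and it is injective because $t\in D_t$ forces $D_s=D_t\Rightarrow s\leq t$ and $t\leq s\Rightarrow s=t$. Since $(\mathcal{P}(T)\setminus\{\emptyset\},\cap)$ is exactly the free semilattice of rank $|T|$ in the subset model, this exhibits $T$ as a subsemilattice of a free semilattice of finite rank, and combining with the finiteness bound gives rank at most $2^k-1$.

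\textbf{Main obstacle.} None of the individual steps is deep; the only point requiring care is making the identification "finite semilattice with meet $=$ intersection of downsets" match the stated normal-form description of $\F$, i.e.\ checking that $(\mathcal{P}(X)\setminus\{\emptyset\},\cap)$ really is a free semilattice (equivalently, that union of finite subsets is the free model). This is the verification of the universal property sketched in the first paragraph, and it is where one must be slightly careful that the order convention $\b\leq\c\Leftrightarrow\b\c=\b$ used in the paper corresponds to reverse inclusion of index sets, so that products correspond to unions rather than intersections — a harmless relabeling, but worth stating explicitly to avoid confusion.
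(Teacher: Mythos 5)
The finiteness half of your argument is correct and complete: every element of a semilattice generated by $t_1,\dots,t_k$ is, by commutativity and idempotency, determined by the set of generators occurring in it, so there are at most $2^k-1$ elements. (The paper gives no proof of this theorem, quoting it as well known, so your proof stands or falls on its own.) The gap is in the embedding half, at exactly the point you flagged as needing care: $(\mathcal{P}(T)\setminus\{\emptyset\},\cap)$ is \emph{not} the free semilattice of rank $|T|$ — it is not even a semilattice, since two disjoint nonempty subsets have empty intersection. Under the paper's conventions the free semilattice on a finite set $X$ is the set of \emph{nonempty} subsets of $X$ under \emph{union}; passing to complements, its intersection model is the set of \emph{proper} subsets of $X$ under $\cap$, not the nonempty ones. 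This mismatch is not a harmless relabeling. Your map $t\mapsto D_t$, transported by complementation into the union model, becomes $t\mapsto T\setminus D_t$, and whenever $T$ has a greatest element $m$ (e.g.\ the two-element chain $T=\{a,b\}$ with $ab=b$, where $b\leq a$ and $D_a=T$), it sends $m$ to the empty set, which is not an element of the free semilattice. So the image of your map need not lie inside a free semilattice, and the concluding identification fails precisely when $T$ has a maximum — which certainly can happen for finitely generated semilattices.

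The repair is standard and cheap: adjoin one dummy generator. Define $\phi(t)=\{*\}\cup\{s\in T\mid s\not\leq t\}=\{*\}\cup(T\setminus D_t)$, viewed as an element of the free semilattice on the finite set $T\cup\{*\}$ (nonempty subsets under union). From $D_{st}=D_s\cap D_t$ one gets $T\setminus D_{st}=(T\setminus D_s)\cup(T\setminus D_t)$, hence $\phi(st)=\phi(s)\phi(t)$; the map is injective because $D_t$ is recoverable from $\phi(t)$; and every $\phi(t)$ contains $*$, hence is a legitimate nonempty word. This embeds the finite semilattice $T$ into a free semilattice of rank $|T|+1$, which completes the proof of the theorem; the rest of your argument (the subset model of freeness, the bound $|T|\leq 2^k-1$, injectivity of $t\mapsto D_t$) can be kept as is.
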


\medskip

Let $X=\{x_1,x_2,\ldots,x_n\}$ be a finite set of variables, and $F(X)$ be the free semilattice generated by the set $X$.

An $\F$-\textit{term} $\t(X)$ is an element of the free product $\F\ast F(X)$, i.e. $\t$ is one of the following expressions: $w(X)\a$, $w(X)$, $\a$, where $w(X)\in F(X)$ (a product of $x_i$), and $\a\in\F$. If $\t(X)$ does not contain any $\a\in\F$, the term $\t(X)$ is \textit{coefficient-free}.

\begin{remark}
Further we shall denote coefficient-free terms by Latin letters. The Greek letters denote $\F$-terms which may contain a constant. 
\end{remark}

\medskip

The equality of two $\F$-terms $\t(X)=\s(X)$ is called an {\it $\F$-equation}. For example, the next expressions are $\F$-equations $x_1x_2\a_1\a_2=x_1x_2\a_1\a_3\a_4$, $x_1x_2\a_2=x_3x_4$.

Often we shall use inequalities as equations, since
\[
\t(X)\leq \s(X)\Leftrightarrow \t(X)\s(X)=\t(X).
\]

One can naturally define the {\it solution set} $\V_\F(\t(X)=\s(X))$ of an $\F$-equation $\t(X)=\s(X)$ in the free semilattice $\F$. An arbitrary set of $\F$-equations is called a {\it system of equations} ({\it system} for shortness). Remark that we always consider systems depending on at most finite set of variables. The solution set of a system is the intersection of solution sets of its equations. If a system $\Ss$ has no solutions, it is called \textit{inconsistent}.

A set $Y\subseteq\F^n$ is called \textit{algebraic} if there exists a system of $\F$-equations with the solution set $Y$. An algebraic set is \textit{irreducible} if it is not a proper finite union of algebraic sets. 

One can naturally define the notion of $L$-equation (algebraic set) for any semilattice $L$ not merely for free semilattice $\F$. Hence, we formulate the next definitions in the general case.

Two systems of $L$-equations are called \textit{equivalent} if they have the same solution set over a semilattice $L$. 

A semilattice $L$ has the \textit{Noetherian property for consistent systems (NPC)} if any consistent system of $L$-equations is equivalent over $L$ to its finite subsystem.

A system of $L$-equations $\Ss$ is {\it homogeneous}, if there exist coefficient-free terms $t(X),s(X)$ such that $\Ss=\{t(X)\b_i=s(X)\c_i|i\in\mathcal{I},\b_i\c_i\in L^\ast\}$. Further we denote such system $\Ss$ by $\Ss_{t,s}$.

As the set of all pairs of coefficient-free terms is finite, any system of $L$-equations $\Ss$ over a semilattice $L$ is equivalent to a finite union 
\begin{equation}
\bigcup_{t,s}\Ss_{t,s}(X),
\label{eq:system_decomposition}
\end{equation}
where $\Ss_{t,s}(X)$ is the homogeneous system defined by the coefficient-free terms $t(X),s(X)$.

\bigskip

Let $Y\subseteq\F^n$ be an algebraic set over $\F$. The $\F$-terms $\s(X),\t(X)$ are equivalent if they have the same values at any point $P\in Y$. The set of such equivalence classes form a semilattice $\Gamma(Y)$ which is called the \textit{coordinate semilattice} of $Y$ (see~\cite{uniTh_I} for more details). 

A coordinate semilattice which corresponds to an irreducible algebraic set is called \textit{irreducible}. A coordinate semilattice determines an algebraic set up to isomorphism (the isomorphism of algebraic sets was defined in~\cite{uniTh_I}). Thus, one can consider the main aim of algebraic geometry as the classification of coordinate semilattices. 

Any semilattice with a fixed subsemilattice isomorphic to $\F$ is called an {\it $\F$-semilattice}. More formally, any semilattice $S$ with a fixed embedding $\eps_S\colon\F\to S$ is an $\F$-\textit{semilattice}. As any distinct constants $\a,\b\in\F$ are not equivalent with respect to the defined above relation, $\F$ is embedded into $\Gamma(Y)$. Hence, any coordinate semilattice over $\F$ is an $\F$-semilattice.

Notice that the set of $\F$-terms in variables $X$ is generated by the sets $X,\F$, therefore any coordinate $\F$-semilattice is finitely generated (as $\F$-algebra).

Let $S,L$ be $\F$-semilattices. A semilattice homomorphism $\psi\colon S\to L$ is called an $\F$-{\it homomorphism} if $\psi\circ\eps_S=\eps_L$ (i.e. $\psi$ fixes $\F$). The set of all $\F$-homomorphisms between $S,L$ is denoted by $\Hom_\F(S,L)$. A homomorphism $\psi\in\Hom_\F(S,L)$ is a $\F$-{\it embedding} if $\psi(s_1)\neq\psi(s_2)$ for all distinct $s_1,s_2\in S$. 

An $\F$-semilattice $S$ is $\F$-\textit{discriminated} by $L$ if for any distinct $s_1,s_2,\ldots,s_n\in S$ there exists $\psi\in\Hom_\F(S,L)$ with $\psi(s_i)\neq\psi(s_j)$ ($i\neq j$).

Let $\varphi$ be a first-order sentence of the language $\{\cdot\}\cup\{\a|\a\in\F\}$. The constants $\{\a|\a\in\F\}$ allow us to use explicitly the elements of $\F$ in $\varphi$. If a formula $\varphi$ holds in an $\F$-semilattice $S$ it is denoted by $S\models\varphi$. A formula $\varphi$ is \textit{universal} if it is equivalent to a formula
\[
\forall x_1\forall x_2\ldots\forall x_n \varphi^\pr(x_1,x_2,\ldots,x_n),
\] 
where $\varphi^\pr$ is quantifier-free.

The {\it universal closure} $\ucl(\F)$ of $\F$ consists of all $\F$-semilattices $S$ such that $S\models\varphi$ for any universal $\varphi$ with $\F\models\varphi$.

The next theorems were proved in~\cite{uniTh_I} for any algebraic structure, but we formulate it for $\F$.

\begin{theorem}\textup{\cite{uniTh_I}}
\label{th:coord=dis}
A finitely generated $\F$-semilattice $S$ is an irreducible coordinate semilattice iff $S$ is $\F$-discriminated by $\F$. 
\end{theorem}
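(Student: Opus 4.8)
The plan is to pass back and forth between an algebraic set $Y\subseteq\F^n$ and its coordinate semilattice through the correspondence between the points of $Y$ and the $\F$-homomorphisms $\Gamma(Y)\to\F$. Write $\F[X]$ for the semilattice of $\F$-terms in $X=\{x_1,\dots,x_n\}$ taken modulo equality at every point of $\F^n$; this is the free $\F$-semilattice on $X$ (i.e. $\F\ast F(X)$), and an $\F$-homomorphism $\F[X]\to\F$ is exactly evaluation at some $P\in\F^n$. If $Y=\V_\F(\Ss)$ then $\Gamma(Y)=\F[X]/\theta_Y$, where $\theta_Y$ is the $\F$-congruence ``agreeing at every point of $Y$'', and $P\in\F^n$ lies in $Y$ precisely when its evaluation map annihilates $\theta_Y$, i.e. factors through the quotient map $\pi\colon\F[X]\to\Gamma(Y)$. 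Hence the points of $Y$ are in bijection with $\Hom_\F(\Gamma(Y),\F)$, and this dictionary is what I would use throughout.

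For the implication ``irreducible coordinate semilattice $\Rightarrow$ $\F$-discriminated by $\F$'', let $S=\Gamma(Y)$ with $Y$ irreducible, and fix distinct $s_1,\dots,s_m\in S$, represented by $\F$-terms $\tau_1,\dots,\tau_m$. For $i\neq j$ the set $Y_{ij}=\{P\in Y:\tau_i(P)=\tau_j(P)\}$ is algebraic (adjoin $\tau_i=\tau_j$ to a system defining $Y$) and is a proper subset of $Y$ because $s_i\neq s_j$ in $\Gamma(Y)$. If $\bigcup_{i\neq j}Y_{ij}$ were equal to $Y$, this would display $Y$ as a finite union of proper algebraic subsets, contradicting irreducibility; so some $P\in Y$ avoids all the $Y_{ij}$, and its evaluation map $\psi\colon S\to\F$ is an $\F$-homomorphism with $\psi(s_i)=\tau_i(P)\neq\tau_j(P)=\psi(s_j)$ for all $i\neq j$.

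For the converse, let $S$ be a finitely generated $\F$-semilattice that is $\F$-discriminated by $\F$. A choice of $\F$-generators gives a surjective $\F$-homomorphism $\pi\colon\F[X]\to S$; put $\theta=\ker\pi$ and $Y=\V_\F(\theta)\subseteq\F^n$. First I would check $\Gamma(Y)\cong S$: plainly $\theta\subseteq\theta_Y$, and if $\pi(\tau)\neq\pi(\sigma)$ then separating this single pair gives $\psi\in\Hom_\F(S,\F)$ with $\psi\pi(\tau)\neq\psi\pi(\sigma)$, so the point $P$ determined by $\psi\pi$ lies in $Y$ (it annihilates $\theta$) and witnesses $(\tau,\sigma)\notin\theta_Y$; hence $\theta_Y=\theta$. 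Next I would check $Y$ is irreducible: if $Y=Y_1\cup\dots\cup Y_k$ with each $Y_i=\V_\F(\Ss_i)$ properly contained in $Y$, then for each $i$ some equation $\tau_i=\sigma_i$ of $\Ss_i$ must fail somewhere on $Y$ (otherwise $Y\subseteq Y_i$), so $[\tau_i]\neq[\sigma_i]$ in $S$ while $Y_i\subseteq\V_\F(\tau_i=\sigma_i)$; now apply $\F$-discrimination to the finitely many distinct elements among $\{[\tau_1],[\sigma_1],\dots,[\tau_k],[\sigma_k]\}$ to obtain a single $\psi\in\Hom_\F(S,\F)$ separating $[\tau_i]$ from $[\sigma_i]$ for every $i$; the corresponding point $P\in Y$ then belongs to no $Y_i$, a contradiction. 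Thus $S\cong\Gamma(Y)$ with $Y$ irreducible.

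The background fact needing care is that $\F[X]$ really is the free $\F$-semilattice on $X$ — equivalently, that two $\F$-terms taking equal values at every point of $\F^n$ coincide in $\F\ast F(X)$ — which is the functional representation of free semilattices and should be isolated as a small lemma or quoted. The genuinely load-bearing step, however, is the irreducibility argument in the converse: it is precisely here that one needs the discrimination hypothesis in its strong form, separating arbitrarily many elements of $S$ at once rather than just one pair at a time, so that a single $\F$-homomorphism defeats all $k$ pieces of a hypothetical decomposition simultaneously. (Note that, unlike the finer results about $\ucl(\F)$, this equivalence uses no form of the Noetherian property.)
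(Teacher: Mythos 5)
The paper gives no proof of this theorem at all --- it is quoted from~\cite{uniTh_I} --- so there is no internal argument to compare with; judged on its own, your proof is correct, and it is essentially the standard argument for this part of the unification theorems, specialized to semilattices. Both directions are sound: in the forward direction the finitely many sets $Y_{ij}$ are proper algebraic subsets of the irreducible set $Y$, so their union misses a point, whose evaluation discriminates; in the converse you correctly split the work into showing $\theta=\theta_Y$ (which needs only separation of one pair at a time) and then defeating a finite decomposition $Y=Y_1\cup\dots\cup Y_k$ with a single homomorphism, and you are right that this last step is exactly where discrimination of several elements simultaneously is indispensable, and that no Noetherian-type hypothesis enters anywhere. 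Two points are worth making explicit if this were written out in full. First, your background lemma --- that two $\F$-terms taking equal values at every point of $\F^n$ coincide in $\F\ast F(X)$, so that the evaluation description of $\F[X]$ agrees with the free $\F$-semilattice and every finitely generated $\F$-semilattice is a quotient of it --- holds only because $\F$ has infinite rank (one needs fresh free generators to substitute for the variables; for $\F$ of rank $1$ the terms $x$ and $x\a_1$ agree at every point but differ in $\F\ast F(X)$); this is the same fresh-generator device the paper uses in the proof of $(2)\Rightarrow(1)$ of Theorem~\ref{th:main_irred}, and it is consistent with the paper's standing assumption on $\F$. Second, the degenerate empty set should be dismissed explicitly: since $S$ is an $\F$-semilattice, distinct constants stay distinct in it, so a set $Y$ with $\Gamma(Y)\cong S$ is automatically nonempty, and in the converse $\Hom_\F(S,\F)\neq\emptyset$ already follows from discrimination, so $Y=\V_\F(\theta)\neq\emptyset$; this matches the ``nonempty'' qualifier that appears in the paper's Theorem~\ref{th:main_irred}.
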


\begin{theorem}\textup{\cite{uniTh_I}}
\label{th:coord->ucl}
If a finitely generated $\F$-semilattice $S$ is an irreducible coordinate semilattice then $S\in\ucl(\F)$. 
\end{theorem}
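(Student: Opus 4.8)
The plan is to reduce this to Theorem~\ref{th:coord=dis}. Since $S$ is a finitely generated irreducible coordinate semilattice, that theorem tells us $S$ is $\F$-discriminated by $\F$, so it suffices to prove the general implication: every $\F$-semilattice $\F$-discriminated by $\F$ lies in $\ucl(\F)$. So let $\varphi=\forall x_1\ldots\forall x_n\,\varphi^\pr(x_1,\ldots,x_n)$ be a universal sentence of the language $\{\cdot\}\cup\{\a\mid\a\in\F\}$ with $\F\models\varphi$, and suppose toward a contradiction that $S\not\models\varphi$; then there is a tuple $P=(s_1,\ldots,s_n)\in S^n$ with $S\models\neg\varphi^\pr(P)$.

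Next I would put the quantifier-free formula $\neg\varphi^\pr$ into disjunctive normal form, $\neg\varphi^\pr\equiv\bigvee_k\bigl(\bigwedge_i(u_{k,i}=u^\pr_{k,i})\wedge\bigwedge_j(v_{k,j}\neq v^\pr_{k,j})\bigr)$, where all the $u$'s and $v$'s are $\F$-terms in $x_1,\ldots,x_n$, i.e.\ expressions built from the variables and constants of $\F$ using $\cdot$. As $S\models\neg\varphi^\pr(P)$, at least one disjunct, say the $k_0$-th, is satisfied at $P$: the equalities $u_{k_0,i}(P)=u^\pr_{k_0,i}(P)$ all hold in $S$, and the inequalities $v_{k_0,j}(P)\neq v^\pr_{k_0,j}(P)$ all hold in $S$. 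Let $D\subseteq S$ be the finite set of all values $v_{k_0,j}(P)$ and $v^\pr_{k_0,j}(P)$, and list its pairwise distinct elements.

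Because $S$ is $\F$-discriminated by $\F$, there exists $\psi\in\Hom_\F(S,\F)$ that is injective on the finite set $D$; in particular $\psi\bigl(v_{k_0,j}(P)\bigr)\neq\psi\bigl(v^\pr_{k_0,j}(P)\bigr)$ for every $j$. Set $Q=(\psi(s_1),\ldots,\psi(s_n))\in\F^n$. Since $\psi$ is a semilattice homomorphism fixing $\F$ (that is, $\psi\circ\eps_S=\eps_\F$), a one-line induction on term structure gives $\psi\bigl(w(P)\bigr)=w(Q)$ for every $\F$-term $w$. Applying $\psi$ to the $k_0$-th disjunct: the equalities are transported to $u_{k_0,i}(Q)=u^\pr_{k_0,i}(Q)$ in $\F$, and the inequalities $v_{k_0,j}(Q)\neq v^\pr_{k_0,j}(Q)$ survive by the choice of $\psi$. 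Hence $\F\models\neg\varphi^\pr(Q)$, contradicting $\F\models\varphi$. Therefore $S\models\varphi$, and since $\varphi$ was an arbitrary universal consequence of $\F$, we conclude $S\in\ucl(\F)$.

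The substantive input is entirely Theorem~\ref{th:coord=dis} (irreducible coordinate semilattice $\Rightarrow$ $\F$-discriminated by $\F$); the rest is generic model theory expressing that universal sentences reflect along any homomorphism that happens to be injective on the finitely many term-values relevant to one disjunct. I expect the only real care needed to be bookkeeping: handling atomic formulas and terms that carry constants from $\F$ (harmless, because an $\F$-homomorphism fixes those constants and so commutes with $\F$-term evaluation), and making sure the finite set $D$ is chosen to contain every pair of values whose distinctness the chosen disjunct requires, so that a single discriminating $\psi$ does the whole job.
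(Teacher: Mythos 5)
Your argument is correct, but note that the paper does not actually prove Theorem~\ref{th:coord->ucl} at all: it is imported verbatim from~\cite{uniTh_I}, so there is no internal proof to compare against. What you have done is reconstruct the standard argument from the cited literature: reduce to Theorem~\ref{th:coord=dis} (irreducible coordinate semilattice $\Rightarrow$ $\F$-discriminated by $\F$) and then show that discrimination reflects universal sentences, by putting $\neg\varphi^\pr$ in disjunctive normal form and pushing a violating tuple along a homomorphism $\psi\in\Hom_\F(S,\F)$ chosen injective on the finitely many term-values occurring in the negated equations; since $\psi$ fixes the constants from $\F$, it commutes with evaluation of $\F$-terms, so equalities are preserved and the selected inequalities survive, contradicting $\F\models\varphi$. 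This is sound, and it is the right level of generality (nothing about semilattices is used beyond the language having only equational atomic formulas). One small point worth making explicit: if the satisfied disjunct contains no negated equations, your set $D$ is too small to invoke discrimination as stated, and you still need \emph{some} $\psi\in\Hom_\F(S,\F)$ to transport the equalities; this is immediate, e.g.\ by applying discrimination to two distinct constants of $\F\subseteq S$ (or by evaluating at any point of the nonempty algebraic set whose coordinate semilattice $S$ is), but it should be said. With that remark added, your proof is a complete, self-contained derivation of the cited theorem from Theorem~\ref{th:coord=dis}, which is slightly more than the paper itself offers.
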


\medskip

A set $M$ of $\F$-equations is \textit{congruent} if the next conditions hold:
\begin{enumerate}
\item $\s(X)=\s(X)\in M$ for any $\F$-term $\s(X)$;
\item if $\s(X)=\t(X)\in M$ then $\t(X)=\s(X)\in M$;
\item if $\s(X)=\t(X)\in M$, $\t(X)=\rho(X)$ then $\s(X)=\rho(X)\in M$;
\item if $\s(X)=\t(X)\in M$, $\kappa(X)=\rho(X)$ then $\s(X)\kappa(X)=\t(X)\rho(X)\in M$.
\end{enumerate} 

A minimal congruent set including a system $\Ss$ is called the \textit{congruent closure} and denoted by $[\Ss]$. 

Clearly, the congruent closure of a system $\Ss$ contains only ``trivial'' consequences from the equation of the system. The next statement describes the conditions when a semilattice of a given presentation is a coordinate semilattice.

\begin{theorem}
A finitely generated $\F$-semilattice $S$ with a presentation
\[
S=\lb x_1,x_2,\ldots,x_n,\F|R\rb
\]
is a coordinate semilattice of an algebraic set over $\F$ iff the congruent closure of the relations $[R]$ coincides with the radical of $R$ 
\[
\Rad_{\F}(R)=\{\s(X)=\t(X)|\s(P)=\t(P)\mbox{ for all }P\in\V_\F(R)\}.
\]  
\label{th:definition_of_coord_semilattice}
\end{theorem}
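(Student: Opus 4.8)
The plan is to regard both $[R]$ and $\Rad_\F(R)$ as congruent sets on one and the same object, the free $\F$-semilattice $W:=\F\ast F(X)$ of all $\F$-terms in $X$, so that $S=W/[R]$, and then to compare them by passing to $\F$-homomorphisms into $\F$. First I would record the standard dictionary between points and homomorphisms: a point $P=(p_1,\dots,p_n)\in\F^n$ gives the $\F$-homomorphism $\phi_P\colon W\to\F$ sending $x_i$ to $p_i$, and then $\phi_P(\t)=\t(P)$ for every $\F$-term $\t$; moreover $P\in\V_\F(R)$ iff $R\subseteq\ker\phi_P$ iff $[R]\subseteq\ker\phi_P$, because $[R]$ is the least congruent set containing $R$ while $\ker\phi_P$ is a congruent set. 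Hence $\phi_P$ factors through the quotient $\pi\colon W\to S$ exactly when $P\in\V_\F(R)$, and this yields a bijection between $\V_\F(R)$ and $\Hom_\F(S,\F)$. I would also note that $\Rad_\F(R)$ is itself a congruent set on $W$, namely the kernel of $W\to\prod_{P\in\V_\F(R)}\F$, $\t\mapsto(\t(P))_P$; since $R\subseteq\Rad_\F(R)$ this makes $[R]\subseteq\Rad_\F(R)$ automatic, so only the reverse inclusion is at stake.

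For the implication ``$[R]=\Rad_\F(R)\ \Rightarrow\ S$ is a coordinate semilattice'' I would take $Y:=\V_\F(R)$ and observe that, straight from the definition of the coordinate semilattice, $\Gamma(Y)$ is $W$ modulo the relation ``$\s\equiv\t$ iff $\s(P)=\t(P)$ for all $P\in Y$'', which is verbatim $\Rad_\F(R)$. Hence $\Gamma(Y)=W/\Rad_\F(R)=W/[R]=S$, and as the identification fixes every constant it is an $\F$-isomorphism; thus $S$ is the coordinate semilattice of the algebraic set $Y$.

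For the converse I would fix an algebraic set $Y_0$ and an $\F$-isomorphism $\mu\colon S\to\Gamma(Y_0)$, and use that $\Gamma(Y_0)$ is separated by $\F$: for each $P_0\in Y_0$ the evaluation $\mathrm{ev}_{P_0}\colon\Gamma(Y_0)\to\F$, $[\gamma]\mapsto\gamma(P_0)$, is a well-defined $\F$-homomorphism, and by the very definition of $\Gamma(Y_0)$ two distinct classes of $\Gamma(Y_0)$ differ at some point of $Y_0$; so, transporting along $\mu$, the homomorphisms $\mathrm{ev}_{P_0}\circ\mu\in\Hom_\F(S,\F)$ separate the elements of $S$. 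Now let $(\s,\t)\in\Rad_\F(R)$ and suppose, for contradiction, that $\s$ and $\t$ have distinct images in $S$. Pick $\psi\in\Hom_\F(S,\F)$ separating these images; then $\psi\circ\pi=\phi_P$ for $P:=((\psi\circ\pi)(x_1),\dots,(\psi\circ\pi)(x_n))$ (both sides are $\F$-homomorphisms $W\to\F$ that agree on the $x_i$ and on $\F$). Since $R\subseteq[R]\subseteq\ker(\psi\circ\pi)=\ker\phi_P$, every equation of $R$ holds at $P$, so $P\in\V_\F(R)$; but then $(\s,\t)\in\Rad_\F(R)$ forces $\s(P)=\t(P)$, i.e. $\phi_P(\s)=\phi_P(\t)$, i.e. $(\psi\circ\pi)(\s)=(\psi\circ\pi)(\t)$ --- contradicting the choice of $\psi$. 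Hence $\Rad_\F(R)\subseteq[R]$, and combined with the automatic reverse inclusion this yields $[R]=\Rad_\F(R)$.

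The routine verifications --- that $\phi_P$, the $\mathrm{ev}_{P_0}$ and $\pi$ are $\F$-homomorphisms, and that $\Rad_\F(R)$ is a congruent set --- I would leave to the reader. The points needing real care are the two identifications: matching the two descriptions of the coordinate semilattice $\Gamma(Y)$ (as equivalence classes of $\F$-terms, and as $W/\Rad_\F(R)$), and transporting the point-separation of $\Gamma(Y_0)$ back to $S$ across the abstract isomorphism $\mu$, keeping in mind that $Y_0$ need not lie in $\F^n$ and that $\mu$ need not send $x_i$ to a coordinate function. (If $\V_\F(R)=\emptyset$ the statement still holds: then $\Rad_\F(R)=W\times W$ strictly contains $[R]$ because $\F$ embeds into $S$, while $\Hom_\F(S,\F)=\emptyset$, so $S$ is not a coordinate semilattice; both sides of the equivalence fail.)
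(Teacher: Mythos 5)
Your proof is correct. The paper itself gives no argument beyond the remark that the statement ``follows directly from the definition of coordinate semilattice,'' and the core of your write-up is exactly that definition-unwinding: $S=W/[R]$ with $W=\F\ast F(X)$, while $\Gamma(\V_\F(R))=W/\Rad_\F(R)$ verbatim, so equality of the two congruent sets is equality of the two quotients. The one place where you genuinely add content beyond the paper's implicit reading is the converse: you allow $S$ to be (abstractly $\F$-isomorphic to) the coordinate semilattice of an \emph{arbitrary} algebraic set $Y_0$, not just of $\V_\F(R)$, and recover $\Rad_\F(R)\subseteq[R]$ by transporting the evaluation homomorphisms $\mathrm{ev}_{P_0}$ along $\mu$ and identifying each $\psi\circ\pi$ with a point evaluation $\phi_P$, $P\in\V_\F(R)$; this separation argument is sound (it is the same mechanism as the paper's Theorem~\ref{th:coord=dis}, used here only for pairs) and it, together with your treatment of the degenerate case $\V_\F(R)=\emptyset$, makes the statement robust under the broader reading. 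All the auxiliary claims you defer (that $\ker\phi_P$ and $\Rad_\F(R)$ are congruent sets, that $[R]$ is the congruence generated by $R$, hence $\ker\pi=[R]$) are indeed routine in this setting, so nothing essential is missing.
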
 
\begin{proof}
The proof is directly follows from the definition of coordinate semilattice.
\end{proof}

\section{Noetherian property for consistent systems}

In this section we describe the solutions sets of systems over a free semilattice $\F$ and prove that $\F$ has NPC.

Let us consider equations over $\F$ in at most two variables. Obviously, there exist three main types of such equations.  
\begin{enumerate}
\item $x\a\c=\a\d$,
\item $x\a\c=x\a\d$,
\item $x\a\c=y\a\d$
\end{enumerate}
for $\a,\b,\c\in\F^\ast$ and co-prime $\c,\d$.

Let us solve the equations above.
\begin{enumerate}
\item 
\[
\V_\F(x\a\c=\a\d)=\begin{cases}
\emptyset \mbox{ if } \c\neq 1,\\
\{\a^\pr \d|\a^\pr\geq\a\} \mbox{ otherwise }
\end{cases}.
\]
\item
\[
\V_\F(x\a\c=x\a\d)=\{\a^\pr \c\d t|t\in\F^\ast,\; \a^\pr\geq \a\},
\]
\item
\[
\V_\F(x\a\c=y\a\d)=\{(\a^\pr \d t,\a^{\pr\pr}\c t)|t\in\F^\ast,\; \a^\pr,\a^{\pr\pr}\geq \a\}.
\] 

\end{enumerate} 

Now we describe the solutions set of homogeneous systems in one or two variables over $\F$.

\begin{enumerate}
\item Suppose $\Ss=\{x\e_i\c_i=\e_i\d_i|i\in\I\}$. As the solution set of any equation from $\Ss$ is either finite or empty, $\Ss$ is equivalent to its finite subsystem.

\item Let $\c,\d\in\F$ be co-prime and  $\Ss=\{x\e_i\c=y\e_i\d|i\in\I\}$ an infinite system. Let $\e$ be the supremum of the set $\{\e_i\}$ in $\F^\ast$. By the properties of $\F^\ast$, $\e$ is a supremum of a finite subset $\{\e_1,\e_2,\ldots,\e_n\}\subseteq\{\e_i|i\in \I\}$. It is easy to check that $\Ss$ is equivalent to its finite subsystem $\{x\e_i\c=y\e_i\d|1\leq i\leq n\}$.

\item The system $\Ss=\{x\e_i\c_i=x\e_i\d_i|i\in\I\}$ can be obtained from the next system by the substitution $y:=x$.

\item Let us consider the most general type of a homogeneous system in two variables $\Ss=\{x\e_i\c_i=y\e_i\d_i|i\in\I\}$.  If the set $\{\c_i,\d_i|i\in \I\}$ has no infimum, $\Ss$ is inconsistent. Otherwise, let $\f=\inf\{\c_i,\d_i\}$. As there exists at most finite number of different $\c_i$ and $\d_i$ more than $\f$, the system $\Ss$ can be decomposed into a finite union
\[
\Ss=\bigcup_{\c,\d\geq\f}\Ss_{\c,\d}=\bigcup_{\c,\d\geq \f}\{x\e_j\c=y\e_j\d|j\in \I_{\c,\d}\}.
\]

Above we proved that any system $\Ss_{\c,\d}$ is equivalent to its finite subsystem, hence there exist a finite $\Ss^\pr$ which is equivalent to $\Ss$.
\end{enumerate}

Thus, we obtain the next result.

\begin{lemma}
Any consistent homogeneous system in at most two variables is equivalent to its finite subsystem.
\end{lemma}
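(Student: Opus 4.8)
The plan is to reduce an arbitrary consistent homogeneous system in at most two variables to one of the concrete homogeneous shapes that were solved above, and then invoke the case analysis already carried out. Recall that by~\eqref{eq:system_decomposition} every system is equivalent to a finite union of homogeneous systems $\Ss_{t,s}$, but here we are already handed a single homogeneous system $\Ss=\{t(X)\b_i=s(X)\c_i\mid i\in\I\}$ with a fixed pair of coefficient-free terms $t,s$ in variables $X\subseteq\{x,y\}$. The first step is therefore to enumerate, up to renaming of variables, the possible coefficient-free terms $t(X),s(X)$ in two variables: each is one of $1$ (only when the term is a pure constant, using $\F^\ast$), $x$, $y$, or $xy$. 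So $\Ss$ is, after collecting the constants $\b_i,\c_i$ into suitable elements $\e_i,\c_i,\d_i$ of $\F^\ast$ and splitting off co-prime parts, one of: a system in no variables; a one-variable system of type~(1) $\{x\e_i\c_i=\e_i\d_i\}$; a one-variable system of type~(3) $\{x\e_i\c_i=x\e_i\d_i\}$; or a genuinely two-variable system $\{x\e_i\c_i=y\e_i\d_i\}$ (the cases $t=s=xy$, $t=x,s=xy$, etc., all collapse to one of these after multiplying both sides by the common factor and renaming).

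The second step is to observe that each of these shapes has already been treated in the four numbered items preceding the statement: the no-variable case is trivial (either all equations hold in $\F$, so $\Ss$ is equivalent to the empty subsystem, or one fails and $\Ss$ is inconsistent, contrary to hypothesis); item~(1) handles $\{x\e_i\c_i=\e_i\d_i\}$ because each solution set is finite or empty and a consistent intersection of finite sets stabilises after finitely many equations; item~(2) handles $\{x\e_i\c=y\e_i\d\}$ with $\c,\d$ co-prime via the supremum of the $\e_i$ in $\F^\ast$; item~(3) reduces $\{x\e_i\c_i=x\e_i\d_i\}$ to item~(4) by the substitution $y:=x$; and item~(4) handles the general two-variable system by the decomposition over the finitely many pairs $\c,\d\geq\f=\inf\{\c_i,\d_i\}$, each piece being of the form covered by item~(2). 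Thus in every case a finite equivalent subsystem exists.

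The main thing to get right is the reduction in the first step: one must check that pulling the co-prime parts out of the constants $\b_i,\c_i$ really does put every two-variable homogeneous equation into the normal form $x\a\c=y\a\d$ with $\c,\d$ co-prime (or its specialisations), so that nothing outside the enumerated list~(1)--(4) can occur. This is the routine-but-load-bearing part: write $\b_i=\a_i'\c_i$, $\c_i$ (abusing notation) so that $\c_i,\d_i$ are co-prime and $\a_i'$ is their common complement, absorb $\a_i'$ and the common factor of $t,s$ into a single $\e_i\in\F^\ast$, and note that the cases where $t$ or $s$ is empty or where $t,s$ share a variable are obtained by specialising or identifying variables, which only makes the system easier. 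Once this bookkeeping is done, the statement follows immediately from the worked cases, so I would keep the written proof short: state the classification of $(t,s)$, remark that each resulting shape is one of~(1)--(4), and cite the arguments already given.
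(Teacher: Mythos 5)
Your argument is correct and follows the paper's own route: the lemma is proved there by exactly the four-case analysis of the systems $\{x\e_i\c_i=\e_i\d_i\}$, $\{x\e_i\c=y\e_i\d\}$, $\{x\e_i\c_i=x\e_i\d_i\}$, $\{x\e_i\c_i=y\e_i\d_i\}$ that you invoke, using finiteness of solution sets, the supremum of the $\e_i$ in $\F^\ast$, and the infimum $\f$ of the constants. Your additional normalization step (classifying the pairs $(t,s)$ among $1,x,y,xy$ and absorbing shared variables and common constant factors by substitution) is the same substitution device the paper applies only in the following lemma, so it is a harmless, slightly more explicit packaging of the same proof.
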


\medskip

As any homogeneous system $\Ss_{t,s}$ can be reduced by the substitution $\mathbf{x}=t(X)$, $\mathbf{y}=s(X)$ to a homogeneous system in at most two variables, we come to

\begin{lemma}
Any consistent homogeneous system over $\F$ is equivalent to its finite subsystem.
\end{lemma}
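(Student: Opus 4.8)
The plan is to reduce everything to the two-variable case settled in the previous lemma, via the substitution announced just above the statement. Write the given consistent homogeneous system as $\Ss_{t,s}=\{t(X)\b_i=s(X)\c_i\mid i\in\I\}$, where $t=t(X)$, $s=s(X)$ are the fixed coefficient-free terms and $\b_i,\c_i\in\F^\ast$. I would introduce fresh variables $\mathbf{x},\mathbf{y}$ and form the homogeneous system $\T=\{\mathbf{x}\b_i=\mathbf{y}\c_i\mid i\in\I\}$ in at most two variables (if $t$ and $s$ coincide one uses a single variable $\mathbf{x}$; the argument is identical). Let $\pi\colon\F^n\to\F^2$ be the map $\pi(P)=(t(P),s(P))$. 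By construction, for every subset $J\subseteq\I$ a point $P$ lies in $\V_\F(\{t(X)\b_i=s(X)\c_i\mid i\in J\})$ if and only if $\pi(P)\in\V_\F(\{\mathbf{x}\b_i=\mathbf{y}\c_i\mid i\in J\})$.

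First I would use this equivalence with $J=\I$ together with consistency of $\Ss_{t,s}$: choosing any $P\in\V_\F(\Ss_{t,s})$ shows $\pi(P)\in\V_\F(\T)$, so $\T$ is consistent. Then I would apply the previously proved lemma to the consistent homogeneous two-variable system $\T$, obtaining a finite $\I_0\subseteq\I$ with $\V_\F(\T)=\V_\F(\{\mathbf{x}\b_i=\mathbf{y}\c_i\mid i\in\I_0\})$. Finally I would pull this back: the finite subsystem $\Ss_0=\{t(X)\b_i=s(X)\c_i\mid i\in\I_0\}$ satisfies $\V_\F(\Ss_{t,s})\subseteq\V_\F(\Ss_0)$ trivially, and if $P\in\V_\F(\Ss_0)$ then $\pi(P)\in\V_\F(\{\mathbf{x}\b_i=\mathbf{y}\c_i\mid i\in\I_0\})=\V_\F(\T)$, whence $\pi(P)$ satisfies every equation $\mathbf{x}\b_i=\mathbf{y}\c_i$ with $i\in\I$, which by the displayed equivalence (now with $J=\I$) gives $P\in\V_\F(\Ss_{t,s})$. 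Thus $\Ss_0$ is the desired finite equivalent subsystem.

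I do not expect a genuine obstacle here: the mathematical content is entirely contained in the two-variable lemma, and the only point requiring care is that the substitution $\pi$ is defined uniformly on all of $\F^n$, independently of which subsystem is considered, so that equivalence of the two image systems transfers back to equivalence of the original ones. The degenerate cases ($t\equiv s$, or some $\b_i$ or $\c_i$ equal to $1$) are absorbed without change, since the two-variable lemma already covers one-variable systems and arbitrary coefficients from $\F^\ast$.
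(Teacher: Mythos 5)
Your proposal is correct and follows exactly the paper's route: the paper proves this lemma in one line by the very substitution $\mathbf{x}=t(X)$, $\mathbf{y}=s(X)$ reducing $\Ss_{t,s}$ to a consistent homogeneous system in at most two variables, and your argument simply makes explicit (via the map $\pi(P)=(t(P),s(P))$) why consistency transfers forward and equivalence of the finite subsystem pulls back. No gap.
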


\medskip

Since any system over $\F$ is a finite union of homogeneous ones (formula~\ref{eq:system_decomposition}), we have

\begin{theorem}
\label{th:F_has_Noeth_prop}
Any consistent system over $\F$ is equivalent to its finite subsystem. In other words, free semilattice $\F$ has NPC.
\end{theorem}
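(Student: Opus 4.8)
The plan is simply to assemble the two lemmas just established with the decomposition of an arbitrary system into homogeneous pieces provided by formula~(\ref{eq:system_decomposition}). Let $\Ss$ be a consistent system of $\F$-equations in the finite set of variables $X=\{x_1,\dots,x_n\}$. First I would invoke formula~(\ref{eq:system_decomposition}) to sort the equations of $\Ss$ according to the pair $(t,s)$ of coefficient-free terms occurring in them, writing $\Ss$ as the union $\Ss=\bigcup_{t,s}\Ss_{t,s}$ of its homogeneous subsystems $\Ss_{t,s}$. The key point making this reduction useful is that the union is \emph{finite}: by Theorem~\ref{th:embedd} the free semilattice $F(X)$ on the finite set $X$ is finite, hence there are only finitely many coefficient-free terms in the variables $X$, and therefore only finitely many pairs $(t,s)$.

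Next I would observe that consistency passes to each block: since $\Ss_{t,s}\subseteq\Ss$, every solution of $\Ss$ is a solution of $\Ss_{t,s}$, so each $\Ss_{t,s}$ is consistent. Applying the preceding lemma (any consistent homogeneous system over $\F$ is equivalent to a finite subsystem) to each of the finitely many $\Ss_{t,s}$, I obtain finite subsystems $\Ss_{t,s}^{0}\subseteq\Ss_{t,s}$ with $\V_\F(\Ss_{t,s}^{0})=\V_\F(\Ss_{t,s})$. Then $\Ss_{0}:=\bigcup_{t,s}\Ss_{t,s}^{0}$ is a finite subsystem of $\Ss$, and since the solution set of a system is the intersection of those of its equations,
\[
\V_\F(\Ss_{0})=\bigcap_{t,s}\V_\F(\Ss_{t,s}^{0})=\bigcap_{t,s}\V_\F(\Ss_{t,s})=\V_\F(\Ss).
\]
Hence $\Ss$ is equivalent over $\F$ to its finite subsystem $\Ss_{0}$, which is precisely the assertion that $\F$ has NPC.

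Since the real content — the analysis of one- and two-variable homogeneous systems via suprema and infima in $\F^\ast$ — is already contained in the two lemmas, I do not anticipate any genuine obstacle here. The only things requiring attention are (i) the finiteness of the index set in the decomposition~(\ref{eq:system_decomposition}), which is exactly where Theorem~\ref{th:embedd} enters, and (ii) the elementary bookkeeping that a union of one finite subsystem per homogeneous block is again a finite subsystem of $\Ss$ with unchanged solution set.
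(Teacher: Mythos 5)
Your proposal is correct and follows essentially the same route as the paper: decompose the consistent system into its finitely many homogeneous blocks via formula~(\ref{eq:system_decomposition}), apply the lemma on consistent homogeneous systems to each block, and take the union of the resulting finite subsystems. You merely spell out the bookkeeping (finiteness of the index set and preservation of solution sets under the union) that the paper leaves implicit.
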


\begin{corollary}
Any semilattice $L$ embeddable into a free semilattice NPC.
\end{corollary}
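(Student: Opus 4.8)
The plan is to derive this directly from Theorem~\ref{th:F_has_Noeth_prop}. Given a semilattice $L$ embedded into a free semilattice, I first reduce to the case where the ambient free semilattice has infinite rank: every free semilattice embeds into a free semilattice of sufficiently large infinite rank, and the argument of Theorem~\ref{th:F_has_Noeth_prop} uses nothing about $\F$ beyond its being a free semilattice of infinite rank, so it applies to the enlarged one as well. (If one prefers, when the given free semilattice has finite rank then $L$ is finite by Theorem~\ref{th:embedd} and the claim is immediate; but the infinite-rank reduction covers all cases uniformly.) So I fix an embedding $L\hookrightarrow\F$ and identify $L$ with its image, so that every $L$-equation is in particular an $\F$-equation.

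The one observation that needs to be recorded is the \emph{absoluteness of evaluation}: for a point $P\in L^n$ and an $\F$-term $\t(X)$ whose constants lie in $L$, the value $\t(P)$ computed inside the subsemilattice $L$ equals the value computed inside $\F$, since $L$ is closed under multiplication and already contains all the constants appearing in $\t$. Consequently, for every system $\Ss$ of $L$-equations one has $\V_L(\Ss)=\V_\F(\Ss)\cap L^n$.

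Now take an arbitrary consistent system $\Ss$ of $L$-equations, so $\V_L(\Ss)\neq\emptyset$. By the displayed identity, $\V_\F(\Ss)\supseteq\V_L(\Ss)\neq\emptyset$, hence $\Ss$ is consistent over $\F$, and Theorem~\ref{th:F_has_Noeth_prop} yields a finite subsystem $\Ss_0\subseteq\Ss$ with $\V_\F(\Ss_0)=\V_\F(\Ss)$. Intersecting with $L^n$ and applying the absoluteness identity to both $\Ss_0$ and $\Ss$ gives
\[
\V_L(\Ss_0)=\V_\F(\Ss_0)\cap L^n=\V_\F(\Ss)\cap L^n=\V_L(\Ss),
\]
so $\Ss_0$ is the required finite subsystem and $L$ has NPC.

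I expect no genuine obstacle here: the only points requiring (routine) care are the reduction to infinite rank so that Theorem~\ref{th:F_has_Noeth_prop} is literally applicable, and the verification that passing to a subsemilattice changes neither the values of $\F$-terms nor, therefore, the solution sets of systems.
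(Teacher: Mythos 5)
Your argument is correct and is essentially the paper's own proof: embed $L$ in $\F$, note that consistency over $L$ gives consistency over $\F$, apply Theorem~\ref{th:F_has_Noeth_prop} to extract a finite subsystem equivalent over $\F$, and conclude equivalence over $L$ by restricting solution sets to $L^n$. The extra remarks on reducing to infinite rank and on absoluteness of term evaluation are just explicit versions of steps the paper leaves implicit.
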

\begin{proof}
Suppose a system of $L$-equations $\Ss$ is consistent over $L$. As $L\subseteq\F$, $\Ss$ is consistent over $\F$. By above, $\Ss$ is equivalent over $\F$ to its finite subsystem $\Ss^\pr$. By the inclusion $L\subseteq\F$, the systems $\Ss,\Ss^\pr$ have the same solution sets over $L$.
\end{proof}

\section{Irreducible coordinate semilattice over $\F$}

The aim of this section is to prove the next theorem.

\begin{theorem}
\label{th:main_irred}
Let $\F=\{\a_i|i\in\I\}$ be the free semilattice and $S$ a finitely generated $\F$-semilattice with $\Hom_\F(S,\F)\neq\emptyset$. The next conditions are equivalent:
\begin{enumerate}
\item $S$ is a coordinate semilattice over $\F$ of a nonempty irreducible algebraic set;
\item $S$ is $\F$-embeddable into the free product $\F\ast F(T)$, where $F(T)$ is a free semilattice generated by a finite set $T=\{t_1,t_2,\ldots,t_n\}$; 
\item $S\models\Sigma$, where $\Sigma=\{\varphi_i,\psi_{i}|i\in\I\}$,
where
\begin{equation}
\label{eq:varphi_a_i}
\varphi_i\colon\; \forall x,y \left( x\a_i=y\a_i\leftrightarrow (x\a_i=y \vee x=y\a_i \vee x=y) \right)
\end{equation}
\begin{equation}
\label{eq:psi_i}
\psi_i\colon\; \forall x,y \left( xy\leq \a_i\to (x\leq \a_i\vee y\leq \a_i)\right)
\end{equation}
for any free generator $\a_i$ of $\F$.
\end{enumerate}
\end{theorem}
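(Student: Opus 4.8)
The theorem asserts the equivalence of three conditions characterizing irreducible coordinate semilattices over $\F$. I would prove it in the cycle $(1)\Rightarrow(2)\Rightarrow(3)\Rightarrow(1)$, since each implication has a natural mechanism: $(1)\Rightarrow(2)$ uses discrimination together with the explicit description of solution sets from Section~3; $(2)\Rightarrow(3)$ is a verification that $\F\ast F(T)$ satisfies $\Sigma$ plus the fact that $\Sigma$ consists of universal sentences (so it passes to $\F$-subsemilattices); and $(3)\Rightarrow(1)$ reconstructs a discriminating family of $\F$-homomorphisms into $\F$ from the structural constraints $\varphi_i,\psi_i$, invoking Theorem~\ref{th:coord=dis}.

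\textbf{Step 1: $(1)\Rightarrow(2)$.} By Theorem~\ref{th:coord=dis}, an irreducible coordinate semilattice $S$ is $\F$-discriminated by $\F$; since $S$ is finitely generated as an $\F$-algebra, say by $x_1,\dots,x_n$ together with $\F$, I would pick a single $\F$-homomorphism $\psi\colon S\to\F$ that separates enough elements and then refine it. The key is the two-variable solution-set analysis: the generators $x_j$ map under a discriminating family to elements of $\F$ that, read off the various equations holding in $S$, force each $x_j$ to have the parametric form $\a_j' t$ (a constant part times a free "parameter" $t$), exactly as in the solution descriptions for equations of types (1)--(3). This suggests defining an $\F$-embedding $S\to\F\ast F(T)$ by sending $x_j$ to its canonical parametric expression with the parameters collected in a finite set $T$; injectivity follows because distinct elements of $S$ are separated by some $\psi\in\Hom_\F(S,\F)$, and every such $\psi$ factors through this parametric map by specializing the $t_k$'s. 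The main obstacle is bounding the number of parameters needed to be finite and checking that the map is well-defined on all of $S$, not just the generators — this is where the structure theory of Section~3 and finite generation must be combined carefully.

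\textbf{Step 2: $(2)\Rightarrow(3)$.} Both $\varphi_i$ and $\psi_i$ are universal sentences in the language $\{\cdot\}\cup\{\a\mid\a\in\F\}$. So it suffices to check $\F\ast F(T)\models\Sigma$ and then note that $\F$-subsemilattices inherit universal sentences. For $\varphi_i$: in a free product $\F\ast F(T)$, write $x,y$ in normal form; if $x\a_i=y\a_i$, then $x$ and $y$ differ only possibly in whether they contain $\a_i$, which forces one of $x\a_i=y$, $x=y\a_i$, $x=y$. For $\psi_i$: if $xy\leq\a_i$ then $\a_i$ appears in the normal form of $xy$, hence in that of $x$ or of $y$, i.e. $x\leq\a_i$ or $y\leq\a_i$. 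These are short direct computations with normal forms.

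\textbf{Step 3: $(3)\Rightarrow(1)$ — the main obstacle.} This is the hard direction. Assuming $S\models\Sigma$ and $\Hom_\F(S,\F)\neq\emptyset$, I must show $S$ is $\F$-discriminated by $\F$ (then apply Theorem~\ref{th:coord=dis}; nonemptiness of the algebraic set follows from $\Hom_\F(S,\F)\neq\emptyset$). The strategy is: use the hypothesis to realize $S$ concretely as an $\F$-subsemilattice of some $\F\ast F(T)$ — the axioms $\varphi_i$ say that "dividing by $\a_i$" behaves as it does in a free product, and $\psi_i$ says $\a_i$ is meet-irreducible, which together should let one build normal forms for elements of $S$ and hence the embedding into $\F\ast F(T)$. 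Once $S\hookrightarrow\F\ast F(T)$ is established, discrimination by $\F$ is routine: given finitely many distinct elements of $\F\ast F(T)$, specialize the free generators $t_k$ to suitably "generic" (large, mutually co-prime, and co-prime to all relevant constants) elements of $\F$ to separate them all simultaneously — this uses infinite rank of $\F$ crucially. I expect the genuine difficulty to lie in showing $(3)$ forces the embedding into a free product: one must extract, from the abstract axioms, a well-defined normal form and prove no "collapsing" relations beyond those of $\F\ast F(T)$ can hold — essentially a small-cancellation / rewriting argument on $S$ driven by $\varphi_i$ and $\psi_i$.
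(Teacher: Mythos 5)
Your cycle $(1)\Rightarrow(2)\Rightarrow(3)\Rightarrow(1)$ leaves the genuinely hard content unproved in two places, and in one of them your proposed mechanism would not work as described. The crux of the theorem is the passage from the axioms $\Sigma$ to an $\F$-embedding of $S$ into $\F\ast F(T)$; in your plan this is buried inside Step 3 and is only described as an expected difficulty to be handled by ``a small-cancellation / rewriting argument,'' with no actual construction. The paper does give a concrete construction here, and it is not a rewriting argument: one first upgrades $\varphi_i$ to formulas $\varphi_\a$ for every $\a\in\F$ (induction on the length of $\a$) and derives from $\psi_i$ the implication $x\a=y\b\to(x\leq\b\wedge y\leq\a)$ for co-prime $\a,\b$; then one defines $h(s)$ as the least constant of $\F^\ast$ above $s$ (this is where the hypothesis $\Hom_\F(S,\F)\neq\emptyset$ is used, to rule out an infinite descending chain of constants, a point your sketch does not address), proves $h$ is an $\F$-homomorphism using $\psi_i$, factors $S$ by the congruence $s_1\sim s_2\Leftrightarrow s_1\a=s_2\b$ ($\a,\b\in\F^\ast$), and shows $s\mapsto (g(s),h(s))\in (S/\!\sim)\ast\F$ is injective using $\varphi_\a$ and the co-prime implication; Theorem~\ref{th:embedd} then embeds the finitely generated quotient into a finite-rank free semilattice. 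None of these steps appear in your proposal, so Step 3 is a statement of the problem rather than a proof.

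The second gap is your Step 1. Proving $(1)\Rightarrow(2)$ directly is essentially as hard as the structural embedding above, and your sketch does not do it: a discriminating family of homomorphisms $S\to\F$ does not by itself yield a ``canonical parametric form'' $\a_j' t$ for the generators (the solution-set analysis of Section~3 covers single equations in at most two variables, not an arbitrary finite presentation), and the claim that every $\psi\in\Hom_\F(S,\F)$ factors through such a parametric map is exactly what needs proof — indeed the parameterization theorem in Section~5 is deduced \emph{from} Theorem~\ref{th:main_irred}, not available before it. The paper sidesteps this entirely by choosing the cycle $(1)\Rightarrow(3)\Rightarrow(2)\Rightarrow(1)$: $(1)\Rightarrow(3)$ is cheap because $\Sigma$ consists of universal sentences that hold in $\F$ (a short normal-form check, like your Step 2) and irreducible coordinate semilattices lie in $\ucl(\F)$ by Theorem~\ref{th:coord->ucl}; and $(2)\Rightarrow(1)$ is the easy discrimination argument (send the $t_k$ to fresh free generators of $\F$, which exist by infinite rank), matching the latter half of your Step 3. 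So your Step 2 and the discrimination part of Step 3 are fine, but as it stands the proposal omits the one construction the theorem actually requires and adds a second unproved hard step by its choice of cycle.
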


Let us divide the proof into three subsections.

\subsection{$(1)\Rightarrow(3)$}

\begin{lemma}
\label{l:formulas_hold_in_F}
All formulas from $\Sigma$ hold in $\F$.
\end{lemma}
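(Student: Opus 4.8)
The plan is to verify the two families of formulas $\varphi_i$ and $\psi_i$ directly in $\F$ using the normal form of elements and the description of the order $\leq$ given in the preliminaries. Both are universal sentences about pairs of elements, so it suffices to check them for arbitrary $\b,\c\in\F$ written in normal form, say $\b=\prod_{i\in\I_\b}\a_i$ and $\c=\prod_{i\in\I_\c}\a_i$ with $\I_\b,\I_\c$ finite.

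For $\psi_i$: recall that $\b\c\leq\a_i$ means $i\in\I_{\b\c}=\I_\b\cup\I_\c$, since $\b\c$ is the product whose index set is the union. Thus $i\in\I_\b$ or $i\in\I_\c$, which says exactly $\b\leq\a_i$ or $\c\leq\a_i$. The converse implication is trivial since $\leq$ is compatible with multiplication (if $\b\leq\a_i$ then $\b\c\leq\a_i\c\leq\a_i$). So $\psi_i$ holds; this direction is immediate from $\I_{\b\c}=\I_\b\cup\I_\c$.

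For $\varphi_i$: the right-to-left direction is easy — if $\b\a_i=\c$ then $\b\a_i=\c\a_i$ because $\c\a_i=\b\a_i\a_i=\b\a_i$; the cases $\b=\c\a_i$ and $\b=\c$ are symmetric or trivial. The left-to-right direction is the substantive part. Suppose $\b\a_i=\c\a_i$, i.e. $\I_\b\cup\{i\}=\I_\c\cup\{i\}$. Then $\I_\b$ and $\I_\c$ can differ only in whether they contain $i$, so $\I_\b\setminus\{i\}=\I_\c\setminus\{i\}$. There are four subcases according to whether $i\in\I_\b$ and whether $i\in\I_\c$: if both contain $i$ or neither does, then $\I_\b=\I_\c$ and $\b=\c$; if $i\in\I_\b$ but $i\notin\I_\c$, then $\I_\b=\I_\c\cup\{i\}$, so $\b=\c\a_i$; symmetrically if $i\in\I_\c$ but $i\notin\I_\b$, then $\b\a_i=\c$. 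Hence the disjunction holds.

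I do not expect any genuine obstacle here; the only mild care needed is to treat all the subcases of $\varphi_i$ cleanly and to phrase everything in terms of index sets so that the computations are transparent. A clean write-up would state at the outset that elements of $\F$ are identified with their finite index sets, multiplication with union, and $\b\leq\c$ with reverse inclusion $\I_\c\subseteq\I_\b$, and then dispatch each formula in a couple of lines.
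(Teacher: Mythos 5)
Your proof is correct and follows essentially the same route as the paper: a direct verification in $\F$ using the normal form, where the paper phrases the $\varphi_i$ case by listing the three parametrized components of $\V_\F(x\a_i=y\a_i)$ and the $\psi_i$ case by noting that $\a_i$ must occur in $x$ or $y$, which is exactly your index-set case analysis. Your write-up is if anything slightly more complete, since you also check the (trivial) right-to-left direction of $\varphi_i$ explicitly.
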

\begin{proof}
Firstly, we check $\F\models\varphi_i$. Take $x,y$ and solve the equation $x\a_i=y\a_i$. Clearly, the solution set equals to the union of three components:
\[
\V_\F(x\a_i=y\a_i)=\{(t,\a_it)|t\in\F\}\cup\{(\a_it,t)|t\in\F\}\cup\{(t,t)|t\in\F\}
\]

As for any point of the first (resp. second, third) component it holds $y=\a_ix$ (resp. $x=\a_iy$, $x=y$), the disjunction $x\a_i=y \vee x=y\a_i \vee x=y$ holds if assume $x\a_i=y\a_i$. Thus, $\F\models\varphi_i$.

Secondly, we prove  $\F\models\psi_i$. Consider the equation $xy\leq\a_i$ or equivalently $xy\a_i=xy$. As $\a_i$ occurs in the left part, the right part should contain $\a_i$. Hence, $x$ or $y$ contain $\a_i$. In other words, $x\leq\a_i$ or $y\leq\a_i$, and we obtain $\F\models\psi_i$.
\end{proof}

Thus all formulas from $\Sigma$ hold in $\F$. By Theorem~\ref{th:coord->ucl}, $S\in\ucl(\F)$, and $S\models\Sigma$.

\subsection{$(2)\Rightarrow(1)$}

It is sufficient to prove that $S=\F\ast F(T)$ is $\F$-discriminated by $\F$. Let $s_1,s_2,\ldots,s_n$ be pairwise distinct elements of $S$. Denote by $\mathcal{A}$ the set of all free generators $\a_i\in\F$ which occur in the words $\{s_i|1\leq i\leq n\}$. Without loss of generality one can put $\mathcal{A}=\{\a_1,\a_2,\ldots,\a_m\}$.

Consider a map $\psi\colon S\to \F$:
\[
\psi(t_i)=\a_{m+i}, \mbox{ for any }1\leq i\leq n \mbox{ and } \psi(\a)=\a\mbox{ for all }\a\in\F.
\]
It is easy to check that $\psi$ is an $\F$-homomorphism.

Let us prove that $\psi(s_i)\neq\psi(s_j)$ for any $i\neq j$.

Suppose $s_i=\a t$, $s_j=\a^\pr t^\pr$, where $\a,\a^\pr\in\F$, and $t,t^\pr$ are the words in $t_1,t_2,\ldots,t_n$.

\begin{enumerate}
\item Suppose $t\neq t^\pr$. Hence there exists $t_k$ which occurs in $t$ and does not in $t^\pr$, i.e $t_k\geq t$, $t_k\ngeq t^\pr$ (similarly, one can consider $t_k$ with $t_k\ngeq t$, $t_k\geq t^\pr$). Therefore, the image $\psi(t)$ contains $\a_{m+k}$, but $\psi(t^\pr)$ does not. By the definition of $\psi$, $\a_{m+k}$ does not occur in the words $\a,\a^\pr$. Thus, $\psi(s_i)\leq\a_{m+k}$, $\psi(s_j)\nleq\a_{m+k}$ and finally $\psi(s_i)\neq\psi(s_j)$.

\item Let $t=t^\pr$ and $\a\neq\a^\pr$. Therefore there exists $\a_k$ with $\a_k\geq\a$, $\a_k\ngeq \a^\pr$ (similarly, one can consider $\a_k$ with $\a_k\ngeq \a$, $\a_k\geq \a^\pr$). It means that the word $\a$ contains the generator $\a_k$, but $\a^\pr$ does not. By the definition of $\psi$, $\a_{k}$ does not occur in the words $\psi(t),\psi(t^\pr)$. Thus, $\psi(s_i)\leq\a_{k}$, $\psi(s_j)\nleq\a_{k}$ and finally $\psi(s_i)\neq\psi(s_j)$.
\end{enumerate}

Thus, we obtain $\psi(s_i)\neq\psi(s_j)$ for all $i\neq j$, hence $\F$ $\F$-discriminates $S$ and by Theorem~\ref{th:coord=dis} $S$ is a coordinate semilattice of an irreducible algebraic set over $\F$.

\subsection{$(3)\Rightarrow(2)$}

\begin{lemma}
The following formulas hold in $S$ for any $\a\in\F$:
\begin{equation}
\label{eq:varphi_a}
\varphi_\a\colon\; \forall x,y \left( x\a=y\a\leftrightarrow 
\bigvee_{{\a^\pr,\a^{\pr\pr}\geq\a }}x\a^\pr=y\a^{\pr\pr} \right)\;\mbox{\textup{for any co-prime} } \a^\pr,\a^{\pr\pr}\in\F^\ast,
\end{equation}

\end{lemma}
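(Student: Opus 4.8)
The plan is to derive the family $\{\varphi_\a\}$ from the basic family $\{\varphi_i\}$ (for free generators $\a_i$) by induction on the number of generators in the normal form of $\a$, using the fact that $S\models\Sigma$. The base case $\a=\a_i$ is exactly formula~\eqref{eq:varphi_a_i}, once one checks that for a single generator the disjunction $\bigvee_{\a^\pr,\a^{\pr\pr}\geq\a_i}x\a^\pr=y\a^{\pr\pr}$ over co-prime pairs reduces to the three disjuncts $x\a_i=y\vee x=y\a_i\vee x=y$ (taking $(\a^\pr,\a^{\pr\pr})=(\a_i,1),(1,\a_i),(1,1)$; any other co-prime pair with both $\geq\a_i$ is impossible since $\a_i$ would then lie below both, contradicting co-primality, and these are the only options because nothing in $\F^\ast$ strictly exceeds a single generator except $1$).

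For the inductive step, write $\a=\a_i\b$ where $\b\in\F$ has one fewer generator and $\a_i\nleq\b$. I would first establish the ``$\Leftarrow$'' direction, which is trivial: if $x\a^\pr=y\a^{\pr\pr}$ with $\a^\pr,\a^{\pr\pr}\geq\a$ (co-prime), multiplying both sides by a suitable element shows $x\a=y\a$, since $\a\leq\a^\pr$ and $\a\leq\a^{\pr\pr}$ force $x\a=x\a^\pr\a=y\a^{\pr\pr}\a=y\a$. For the ``$\Rightarrow$'' direction, assume $x\a=y\a$, i.e.\ $x\a_i\b=y\a_i\b$. Apply $\varphi_\b$ (induction hypothesis) to the elements $x\a_i$ and $y\a_i$: this gives co-prime $\b^\pr,\b^{\pr\pr}\geq\b$ with $x\a_i\b^\pr=y\a_i\b^{\pr\pr}$. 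Now apply $\varphi_i$ to $x\b^\pr$ and $y\b^{\pr\pr}$ (after rewriting $x\a_i\b^\pr=(x\b^\pr)\a_i$): one of $x\b^\pr\a_i=y\b^{\pr\pr}$, $x\b^\pr=y\b^{\pr\pr}\a_i$, $x\b^\pr=y\b^{\pr\pr}$ holds. In each case I want to package the exponents into a single co-prime pair $(\a^\pr,\a^{\pr\pr})$ with $\a^\pr,\a^{\pr\pr}\geq\a=\a_i\b$; for instance in the first case take $\a^\pr=\b^\pr\a_i$ and $\a^{\pr\pr}=\b^{\pr\pr}$, and then one must delete from $\a^\pr$ the generators it shares with $\a^{\pr\pr}$ (they can be absorbed on both sides) to restore co-primality, checking that the resulting pair still dominates $\a$.

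The main obstacle is precisely this bookkeeping: after the two applications of the inductive/base formulas the exponents $\b^\pr\a_i$ and $\b^{\pr\pr}$ need not be co-prime, and one must argue that cancelling the common part (which is legitimate because if $\d\leq\a^\pr$ and $\d\leq\a^{\pr\pr}$ then $x\a^\pr=x\a^\pr\d=\dots$ lets one strip $\d$) yields a genuine co-prime pair both still $\geq\a$. One also has to be careful that $\psi_i$ plays no role here — it is $\varphi_i$ alone that is needed for this lemma — and to handle the degenerate subcases where $\b^\pr$ or $\b^{\pr\pr}$ already contains $\a_i$ (then $\varphi_i$ may be applied trivially, or the generator is simply already present). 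Once $\{\varphi_\a\}$ is in hand, the subsequent step of the proof of $(3)\Rightarrow(2)$ — embedding $S$ into $\F\ast F(T)$ — will use these formulas to show that the relations forced on the generators of $S$ are exactly the ones holding in the free product, but that is beyond the present statement.
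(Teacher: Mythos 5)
Your argument is essentially the paper's own proof: induction on the length of $\a$, with base case $\varphi_\a=\varphi_i$ for a single generator, and in the inductive step writing $\a=\b\a_i$, applying the induction hypothesis to the elements $x\a_i,y\a_i$ and then $\varphi_i$ to $x\b^\pr,y\b^{\pr\pr}$, exactly as in the paper. One remark: the ``main obstacle'' you flag never actually arises --- since $\b^\pr,\b^{\pr\pr}\geq\b$, every generator occurring in them occurs in $\b$, hence $\a_i$ occurs in neither, and they are co-prime by the induction hypothesis, so the three assembled pairs $(\b^\pr\a_i,\b^{\pr\pr})$, $(\b^\pr,\b^{\pr\pr}\a_i)$, $(\b^\pr,\b^{\pr\pr})$ are automatically co-prime and $\geq\a$; this is fortunate, because the ``strip the common part'' cancellation you propose is not a valid inference in a semilattice (one cannot cancel a factor from $x\a^\pr=y\a^{\pr\pr}$), and removing a shared generator would in fact require invoking the formulas $\varphi_\d$ themselves rather than a direct cancellation.
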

\begin{proof}
We prove the lemma by the induction on the length of the element $\a$. If $|\a|=1$ it is a free generator, and $\varphi_\a$ coincides with $\varphi_{i}$. Assume that the lemma holds for any $\a$ with $|\a|<n$. 

Let us prove the lemma for $\a$ of the length $n$. Let $\a=\bar{\a}\a_i$, where $|\bar{\a}|=n-1$. 

We have ($\a^\pr,\a^{\pr\pr}\in\F^\ast$ are co-prime below):
\begin{multline*}
x\a=y\a\leftrightarrow (x\a_i)\bar{\a}=(y\a_i)\bar{\a}\leftrightarrow 
\bigvee_{{\a^\pr,\a^{\pr\pr}\geq \bar{\a}}}(x\a_i)\a^\pr=(y\a_i)\a^{\pr\pr} \leftrightarrow \\
\bigvee_{{\a^\pr,\a^{\pr\pr}\geq \bar{\a}}}(x\a^\pr)\a_i=(y\a^\pr\pr)\a_i \leftrightarrow
\bigvee_{{\a^\pr,\a^{\pr\pr}\geq \bar{\a}}}x\a^\pr\a_i=y\a^{\pr\pr} \vee
\bigvee_{{\a^\pr,\a^{\pr\pr}\geq \bar{\a}}}x\a^\pr=y\a^{\pr\pr}\a_i \vee \\
\bigvee_{{\a^\pr,\a^{\pr\pr}\geq \bar{\a}}}x\a^\pr=y\a^{\pr\pr} \leftrightarrow
\bigvee_{{\a^\pr,\a^{\pr\pr}\geq \bar{\a}\a_i}}x\a^\pr=y\a^{\pr\pr},
\end{multline*}
and we obtain $\varphi_\a$.
\end{proof}

\begin{lemma}
The following formulas hold in $S$ for any co-prime $\a,\b\in\F$:
\begin{equation}
\label{eq:varphi_ab}
\varphi_{\a\b}\colon (x\a=y\b\to (x\leq\b)\cdot(y\leq\b))
\end{equation}
\end{lemma}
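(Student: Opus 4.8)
The plan is to derive $\varphi_{\a\b}$ from the formulas $\psi_i$ in $\Sigma$ together with basic semilattice identities, using the fact that $\a,\b$ are co-prime. Suppose $x\a = y\b$ in $S$. Since $\a \leq x\a$ (as $\a$ is a factor of the product $x\a$, i.e.\ $x\a\cdot\a = x\a$ by idempotency and commutativity), we get $\a \leq y\b$, and hence $\a \leq y\b \leq \b$ would be false in general — so instead I work generator by generator. Write $\b = \a_{i_1}\a_{i_2}\cdots\a_{i_k}$ in normal form. For each $j$, from $x\a = y\b$ we obtain $y\b \leq \a_{i_j}$, i.e.\ $\a_{i_j} \leq y\b = y\a_{i_1}\cdots\a_{i_k}$; more to the point, the equation $x\a = y\b$ gives $x\a \leq \a_{i_j}$, so $x\a = x\cdot\a \leq \a_{i_j}$ with $x\a$ written as a product, and applying $\psi_{i_j}$ repeatedly to this product yields that one of the factors — either $x$, or some generator of $\a$ — lies below $\a_{i_j}$.

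The key point is the co-primality of $\a$ and $\b$: no generator of $\a$ equals $\a_{i_j}$, and in $\F$ (hence, I must argue, in $S$ under the constraint imposed by $\Sigma$) a constant $\c \in \F$ satisfies $\c \leq \a_{i_j}$ only if $\a_{i_j}$ occurs in $\c$. So the alternatives "some generator of $\a$ is $\leq \a_{i_j}$" are all excluded, forcing $x \leq \a_{i_j}$. Doing this for every $j = 1,\ldots,k$ gives $x \leq \a_{i_1}, \ldots, x \leq \a_{i_k}$, and since $\b = \a_{i_1}\cdots\a_{i_k}$ we conclude $x \leq \b$ (because $x\b = x\a_{i_1}\cdots\a_{i_k} = x$, using $x\a_{i_j} = x$ each time). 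By the symmetry of the hypothesis $x\a = y\b$ — rewriting it as $y\b = x\a$ and running the identical argument with the roles of the two sides swapped, using that $\a$'s generators are co-prime to... wait: here one must instead show $y \leq \b$, so I apply the analogous reasoning to $y\b \leq \a_{i_j}$ directly, noting $y\b \leq \b \leq \a_{i_j}$... actually $y\b \leq \a_{i_j}$ is immediate since $\a_{i_j}$ is a factor of $\b$, hence of $y\b$; and then $\psi_{i_j}$ applied to $y\b$ plus the observation that $y\b = y$ already tells us nothing new — so for $y$ I simply use $y\b \leq \b$, giving $y \leq \b$ directly from $y\b = y\b$... but $y\b$ need not equal $y$. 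Let me restate: from $x\a = y\b$ we also get, for the generators $\a_{j_1},\ldots,\a_{j_l}$ of $\a$, that $y\b = x\a \leq \a_{j_r}$, and applying $\psi_{j_r}$ to $y\b$ together with co-primality (no generator of $\b$ is below $\a_{j_r}$) forces $y \leq \a_{j_r}$; since this holds for all generators of $\a$ we get $y \leq \a$, and combined with $x\a = y\b$ this gives $y\b = x\a = x\a\cdot y = \ldots$, from which $y \leq \b$ should follow — but the stated conclusion is only "$(x\leq\b)\cdot(y\leq\b)$", so I will aim precisely at $x\leq\b$ and $y\leq\b$ by the two parallel arguments above.

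The main obstacle I anticipate is the step "$\c \leq \a_{i_j}$ in $S$ implies $\a_{i_j}$ occurs in $\c$" for constants $\c\in\F$: this is automatic in $\F$ itself, but $S$ is merely an $\F$-semilattice satisfying $\Sigma$, so I need that the embedding $\eps_S\colon\F\to S$ reflects the order relation among constants. Since $\eps_S$ is an $\F$-embedding it is injective and order-preserving; reflecting the order requires knowing that $\c \not\leq \d$ in $\F$ implies $\eps_S(\c)\not\leq\eps_S(\d)$ in $S$, i.e.\ $\eps_S(\c)\eps_S(\d)\neq\eps_S(\c)$, which does follow from injectivity of $\eps_S$ since $\eps_S(\c)\eps_S(\d) = \eps_S(\c\d)$ and $\c\d\neq\c$. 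So this is in fact fine, and the only genuine work is the careful bookkeeping of which generators appear where, plus the inductive/iterated application of $\psi_i$ to a product of more than two factors (a trivial induction on the number of factors). I would present the argument cleanly by fixing $\b$ in normal form, proving $x\leq\a_{i}$ for each generator $\a_i$ of $\b$ via one application of the iterated-$\psi$ observation and co-primality, then assembling $x\leq\b$, and finally remarking that the symmetric argument on the generators of $\a$ gives $y\leq\b$ as well.
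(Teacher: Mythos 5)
Your core argument is the paper's argument: from $x\a=y\b$ you get $x\a\leq\a_i$ for every generator $\a_i$ of $\b$, apply $\psi_i$ to the product, and use co-primality (together with the fact that $\eps_S$ reflects the order on constants, which you rightly spell out and the paper leaves implicit) to rule out the constant alternatives, obtaining $x\leq\a_i$ for each such $i$ and hence $x\leq\b$; the symmetric run over the generators of $\a$ gives $y\leq\a$. Up to the minor direction slips in your opening aside (with the paper's convention one has $x\a\leq\a$, not $\a\leq x\a$), this part is correct and essentially identical to the paper's proof.

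The genuine problem is your closing commitment to prove $y\leq\b$. That step would fail: $y\leq\b$ does not follow from $x\a=y\b$ even in $\F$ itself --- take $x=\b$, $y=\a$ for co-prime $\a,\b\in\F$; then $x\a=\a\b=y\b$, but $y=\a\not\leq\b$. The conclusion printed in the lemma, $(x\leq\b)\cdot(y\leq\b)$, is a misprint for ``$x\leq\b$ and $y\leq\a$'': the paper's own proof ends with $x\leq\b$ and, ``analogically,'' $y\leq\a$, and that is exactly how $\varphi_{\a\b}$ is applied later (from $s\a=t\b$ the paper deduces $s\leq\b$, $t\leq\a$). Your ``two parallel arguments'' deliver precisely $x\leq\b$ and $y\leq\a$; you should have stopped there and noted the misprint, rather than promising $y\leq\b$, which no argument can deliver.
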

\begin{proof}
From the semilattice theory it holds $x\a\leq\b$. By formula $\psi_i$, any letter $\a_i$ in $\b$ should occur in $x\a$. As $\a$ and $\b$ are co-prime, $\a_i$ occurs in $x$. Thus, $x\leq \b$. Analogically, $y\leq\a$.
\end{proof}

Define a map $h\colon S\to\F^\ast$ such that the image $h(s)$ is the minimal element $\a\in\F^\ast$ with $s\leq\a$. Let us prove that the map $h$ is well-defined. If assume the existence of an infinite chain of elements $\b_1>\b_2>\ldots>\b_n>\ldots$ with $\b_i>s$, we obtain the emptiness of the set $\Hom_\F(S,\F)$ that contradicts with the condition.

\begin{lemma}
The map $h$ is an $\F$-homomorphism.
\end{lemma}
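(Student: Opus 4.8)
The plan is to verify the three semilattice-homomorphism axioms for $h$: that $h$ fixes $\F$, that $h$ is multiplicative, and (the multiplicative check being the only nontrivial part) that $h(s\cdot s')=h(s)\cdot h(s')$ for all $s,s'\in S$. First I would observe that $h$ fixing $\F$ is immediate: for $\a\in\F$ the minimal $\b\in\F^\ast$ with $\a\leq\b$ is $\a$ itself, since $\leq$ restricted to $\F$ has this property and $\a\in\F\subseteq\F^\ast$. So $h\circ\eps_S=\eps_\F$, and it remains to check multiplicativity.

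For multiplicativity, write $\a=h(s)$, $\a'=h(s')$, so $s\leq\a$ and $s'\leq\a'$; hence $ss'\leq\a\a'$, which gives $h(ss')\leq\a\a'$ by minimality of $h(ss')$. For the reverse inequality $h(ss')\geq\a\a'$ — here I use that $\geq$ in $\F^\ast$ means "occurs in fewer generators", so this says every generator of $h(ss')$ already divides $\a\a'$ — I would argue by contradiction or directly: let $\b=h(ss')$, so $ss'\leq\b$. I want to show $s\leq\b$ and $s'\leq\b$, for then $\a=h(s)\leq\b$ and $\a'\leq\b$ by minimality, whence $\a\a'\leq\b$, i.e. $h(ss')=\b\leq\a\a'$. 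Decompose $\b=\b_0\b_1\b_2$ where $\b_1$ consists of the generators $\a_i$ of $\b$ with $s\leq\a_i$ but $s'\not\leq\a_i$, $\b_2$ of those with $s'\leq\a_i$ but $s\not\leq\a_i$, and $\b_0$ of those with both; I must rule out a fourth part of generators $\a_i$ with $ss'\leq\a_i$ yet neither $s\leq\a_i$ nor $s'\leq\a_i$. But $ss'\leq\a_i$ is exactly the hypothesis of $\psi_i$ with $x:=s$, $y:=s'$, which forces $s\leq\a_i$ or $s'\leq\a_i$ — contradiction. So no such generator exists, every generator of $\b$ lies over $s$ or over $s'$; thus $s\leq\b_0\b_1$ and, since $s'\not\leq\a_i$ for the generators in $\b_1$ plays no role here, actually $s\leq\b$ requires care: I need $s\leq\a_i$ for \emph{every} generator of $\b$, which is false for the $\b_2$-part.

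So I would repair the argument: rather than $s\leq\b$ directly, set $\gamma=h(s)$, $\gamma'=h(s')$ and show $\gamma\gamma'\leq\b$, i.e. every generator of $\gamma\gamma'$ occurs in $\b$. Equivalently $\b\leq\gamma\gamma'$ in $\F^\ast$... no — I want $h(ss')\leq h(s)h(s')$ and $h(ss')\geq h(s)h(s')$, and with the convention that $\leq$ reverses generator-inclusion, "$h(ss')\geq h(s)h(s')$" says the generator set of $h(ss')$ is contained in that of $h(s)h(s')$. That is the easy direction I did first ($ss'\leq\a\a'\Rightarrow h(ss')$ has generators among those of $\a\a'$). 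The genuinely needed direction is $h(ss')\leq h(s)h(s')$: every generator of $h(s)h(s')$ occurs in $h(ss')$. Suppose $\a_i$ is a generator of $h(s)=\gamma$ (the case of $\gamma'$ is symmetric). I must show $\a_i$ occurs in $\b=h(ss')$, i.e. $ss'\leq\a_i$. If not, then since $ss'\geq$ fails over $\a_i$... I would use $\varphi_{\a\b}$ (formula~\eqref{eq:varphi_ab}) or a direct minimality argument: $\a_i$ is a generator of the \emph{minimal} upper bound of $s$, so $s\not\leq\gamma/\a_i$ where $\gamma/\a_i$ denotes $\gamma$ with $\a_i$ deleted; combined with $s\leq\gamma$ this pins down that removing $\a_i$ loses an upper bound. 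The cleanest route is: if $ss'\not\leq\a_i$ then $\a_i\nleq$... I would instead show $h(ss')\cdot h(s)=h(ss')$ and $h(ss')\cdot h(s')=h(ss')$ (i.e. $h(s),h(s')\leq h(ss')$), which together give $h(s)h(s')\leq h(ss')$ combined with idempotency, and then combined with the easy direction yields equality. To get $h(s)\leq h(ss')$: we have $s\leq ss'\leq\b$, so $\b$ is an upper bound of $s$, hence $h(s)\leq\b=h(ss')$ by minimality of $h(s)$. This is clean and needs no axioms at all!

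So in fact the whole lemma reduces to: $h$ fixes $\F$ (trivial), $h(s)\leq h(ss')$ and $h(s')\leq h(ss')$ (by minimality, since $s,s'\leq ss'\leq h(ss')$), hence $h(s)h(s')\leq h(ss')$; conversely $ss'\leq h(s)h(s')$ so $h(ss')\leq h(s)h(s')$ by minimality; whence $h(ss')=h(s)h(s')$. The only subtlety — and where I expect the axiom $\psi_i$ is genuinely invoked in the surrounding development — is well-definedness of $h$, already handled in the paragraph preceding the lemma via $\Hom_\F(S,\F)\neq\emptyset$ ruling out infinite descending chains above any element. Therefore the main obstacle is essentially notational: being careful about the order convention on $\F^\ast$ (that $\leq$ reverses generator-containment and that $h(s)$ is the \emph{least} upper bound, which in this convention is the product of \emph{all} common "divisors"). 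I would write the proof in the four lines just described, flagging that minimality is used twice and that no appeal to $\varphi_i,\psi_i$ is needed here since those were already spent guaranteeing $h$ exists.

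Concretely, the write-up would be:

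Since $h$ takes values in $\F^\ast$ and fixes every $\a\in\F$ (the minimal $\b\in\F^\ast$ with $\a\leq\b$ being $\a$ itself), it suffices to prove $h(s s')=h(s)h(s')$ for all $s,s'\in S$. Put $\b=h(ss')$, $\a=h(s)$, $\a'=h(s')$. From $s\leq ss'$ and $s'\leq ss'$ and $ss'\leq\b$ we get $s\leq\b$ and $s'\leq\b$, so $\b$ is an upper bound in $\F^\ast$ of both $s$ and $s'$; by minimality of $\a$ and $\a'$ this gives $\a\leq\b$ and $\a'\leq\b$, hence $\a\a'\leq\b$. Conversely $s\leq\a$ and $s'\leq\a'$ yield $ss'\leq\a\a'$, so by minimality of $\b=h(ss')$ we obtain $\b\leq\a\a'$. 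Therefore $\b=\a\a'$, i.e. $h(ss')=h(s)h(s')$, and $h$ is an $\F$-homomorphism.
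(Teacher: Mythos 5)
Your final ``clean'' write-up is not correct, and the flaw sits exactly at the point where you claim no axioms are needed. Under the paper's order convention ($\b\leq\c\Leftrightarrow\b\c=\b$) the product lies \emph{below} its factors: $ss'\leq s$ and $ss'\leq s'$, not $s\leq ss'$ as you assert. Consequently your intermediate claims $s\leq h(ss')$ and $h(s)\leq h(ss')$ are false in general --- already in $\F$ itself: for $s=\a_1$, $s'=\a_2$ one has $h(ss')=\a_1\a_2$, while $\a_1\leq\a_1\a_2$ fails. The half of your argument that does work is the inclusion you call ``conversely'': $ss'\leq h(s)h(s')$, hence $h(ss')\leq h(s)h(s')$ by minimality. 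The other inclusion $h(s)h(s')\leq h(ss')$ --- equivalently, every free generator $\a_k$ with $ss'\leq\a_k$ satisfies $s\leq\a_k$ or $s'\leq\a_k$ --- cannot be extracted from minimality alone; it is precisely where the paper invokes $\psi_k$: a generator $\a_k$ occurring in $h(ss')$ but in neither $h(s)$ nor $h(s')$ would satisfy $ss'\leq\a_k$, so $\psi_k$ forces $s\leq\a_k$ or $s'\leq\a_k$, contradicting minimality of $h(s)$, $h(s')$. Your closing remark that ``no appeal to $\varphi_i,\psi_i$ is needed here'' is wrong in substance: for the $\F$-semilattice $S=\lb x,y,\F\mid xy\leq\a_1\rb$, which satisfies $\Hom_\F(S,\F)\neq\emptyset$ (send $x,y\mapsto\a_1$) but violates $\psi_1$, one gets $h(x)=h(y)=1$ while $h(xy)=\a_1$, so $h$ is simply not multiplicative once the axiom is dropped; the lemma is being proved for $S\models\Sigma$, and $\Sigma$ is genuinely used.

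Ironically, your earlier exploratory attempt --- splitting the generators of $\b=h(ss')$ according to whether they lie over $s$, over $s'$, or over neither, and killing the last class with $\psi_i$ --- was the right idea and is essentially the paper's proof; the ``repair'' discarded exactly the step carrying the content. To fix the write-up: keep your easy direction, and replace the first half by: every generator $\a_k$ occurring in $h(ss')$ satisfies $ss'\leq h(ss')\leq\a_k$, so by $\psi_k$ either $s\leq\a_k$ or $s'\leq\a_k$; by minimality of $h(s)$ (resp.\ $h(s')$) the generator $\a_k$ then occurs in $h(s)$ (resp.\ $h(s')$), whence $h(s)h(s')\leq h(ss')$ and equality follows.
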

\begin{proof}
Obviously, $h(\a)=\a$ for all $\a\in\F$. 

Let us prove $h(s_1s_2)=h(s_1)h(s_2)$. Denote $h(s_i)=\b_i$.

As $s_i\leq \b_i$, we obtain $s_1s_2\leq\b_1\b_2$. Assume there exists $s_1s_2\leq\c<\b_1\b_2$.

As $\c<\b_1\b_2$ there exists a free generator $\a_{k}\geq\c$ that does not occur in $\b_1,\b_2$. By formula $\psi_i$ we have $s_1\leq\a_k$ or $s_2\leq\a_k$.

Suppose $s_1\leq\a_k$ (similarly, one can assume $s_2\leq\a_k$). By the choice of $\b_1$ we have $\b_1\leq\a_k$ and it follows that $\a_k$ occur in $\b_1$ that contradicts with the choice of the generator $\a_k$. 
\end{proof}

Let us define the equivalence relation over the semilattice $S$ by
\[
s_1\sim s_2 \Leftrightarrow \mbox{ there exists } \a,\b\in\F^\ast\mbox{ with } s_1\a=s_2\b.
\] 

By definition, for all $\a,\b\in\F$ we have $\a\sim \b$. Denote by $[s]$ the equivalence class which contains the element $s\in S$, and let $[\a]$ be the equivalence class which collects all elements of $\F$.

Suppose $s_1\sim s_2$, $t_1\sim t_2$, hence there exists $\a,\a^\pr,\b,\b^\pr\in\F^\ast$ with $s_1\a=s_2\a^\pr$, $t_1\b=t_2\b^\pr$.  Let us multiply the both equalities and obtain
\[
s_1t_1\a\b=s_2t_2\a^\pr\b^\pr,
\]
so $[s_1t_1]=[s_2t_2]$ and $\sim$ is the congruence.

Denote by $S^\pr$ the factor semilattice $S/\sim$ and $g\colon S\to S^\pr$ be the quotient map $g(S)=[s]$. 

For any $s\in S$ we have $[s][\a]=[\a][s]=[s]$. Hence $[\a]\in S^\pr$ is the identity element.

Let $f(s)=g(s)h(s)$ be a map between $S$ and $S^\pr\ast\F$. As $g(\F)=1$, and $h$ is an $\F$-homomorphism, hence $f(\a)=\a$ for any $\a\in\F$. Thus, $f\in\Hom_\F(S,S^\pr\ast\F)$

Let us prove that $f$ is an embedding. Assume the converse: there exists distinct $s,t$ with $f(s)=f(t)$. By the definitions of the homomorphisms $g,h$, the equality $f(s)=f(t)$ implies $g(s)=g(t)$ and $h(s)=h(t)$ in the semilattice $S^\pr\ast\F$. 

As $g(s)=g(t)$, there exists $\a,\b\in\F$ such that 
\begin{equation}
s\a=t\b.
\label{eq:s1}
\end{equation}

If $\a,\b$ are not co-prime, by formula~(\ref{eq:varphi_a}), in $S$ it holds 
\[
s\a^\pr=t\b^\pr,\; \a^\pr\geq\a,\; \b^\pr\geq\b
\] 
for some co-prime $\a^\pr,\b^\pr$.

Thus, one can initially assume that $\a,\b$ in the equality~(\ref{eq:s1}) are co-prime.

For co-prime $\a,\b$ one can apply formula $\varphi_{\a\b}$~(\ref{eq:varphi_ab}) and obtain $s\leq\b$, $t\leq\a$.

Denote $\c=h(s)=h(t)$. By the definition of $h$, $s\leq\c\leq\b$, $t\leq\c\leq\a$. Hence, $s\leq\a$, $t\leq\b$, and the equality~(\ref{eq:s1}) becomes $s=t$ that gives the contradiction.

We obtained that $S$ is $\F$-embedded into $S^\pr\ast\F$. 
As $S^\pr$ is a finitely generated semilattice, by Theorem~\ref{th:embedd} it is embedded into $F(t_1,t_2,\ldots,t_n)$ for an appropriate natural $n$ and we obtain an $\F$-embedding of $S$ into $F(t_1,t_2,\ldots,t_n)\ast\F$.

\section{Parameterization of systems over $\F$} 


%

\begin{theorem}
A set $Y\subseteq\F^n$ is irreducible iff there exists coefficient-free terms $w_1(T),w_2(T),\ldots,w_n(T)$ in variables $T=\{t_1,t_2,\ldots,t_m\}$ and elements $\b_1,\b_2,\ldots,\b_n\in\F^\ast$ such that
\begin{equation}
\V_\F(\Ss)=\{(w_1(T)\b_1,w_2(T)\b_2,\ldots,w_n(T)\b_n)|t_i\in\F\}
\label{eq:decompos_of_irred_syst_with_free_gener}
\end{equation}
\end{theorem}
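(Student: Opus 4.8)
The plan is to deduce this parameterization statement directly from Theorem~\ref{th:main_irred}, which already provides the needed structural description of irreducible coordinate semilattices. The bridge between ``the algebraic set $Y$ is irreducible'' and ``its coordinate semilattice $\Gamma(Y)$ is $\F$-embeddable into $\F\ast F(T)$'' is the equivalence $(1)\Leftrightarrow(2)$ of that theorem; the remaining work is to unwind the definition of the coordinate semilattice into an explicit parameterization of the points of $Y$, and conversely to check that any set of the form~(\ref{eq:decompos_of_irred_syst_with_free_gener}) is algebraic and irreducible.

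\medskip
\noindent\textbf{($\Leftarrow$), the easy direction.} First I would show that a set $Y$ of the form~(\ref{eq:decompos_of_irred_syst_with_free_gener}) is algebraic and irreducible. For algebraicity, observe that the parameterizing map $\F^m\to\F^n$, $(t_1,\ldots,t_m)\mapsto(w_1(T)\b_1,\ldots,w_n(T)\b_n)$, is an $\F$-homomorphism from $F(T)\ast\F$ realized inside $\F$; its image is the solution set of the system saying ``each coordinate $x_k$ lies below $\b_k$, and the $x_k$ satisfy exactly the relations forced among the $w_k(T)\b_k$.'' Concretely, the coordinate semilattice of $Y$ is the $\F$-subsemilattice of $\F\ast F(T)$ generated by $\F$ and the elements $w_k(T)\b_k$; this is finitely generated, has $\Hom_\F(\cdot,\F)\neq\emptyset$ (evaluate $t_i\mapsto\a_j$ for fresh $j$, exactly as in the proof of $(2)\Rightarrow(1)$), and is $\F$-embeddable into $\F\ast F(T)$, so by $(2)\Rightarrow(1)$ of Theorem~\ref{th:main_irred} it is the coordinate semilattice of a nonempty irreducible algebraic set, which must be $Y$ itself.

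\medskip
\noindent\textbf{($\Rightarrow$), the main direction.} Suppose $Y\subseteq\F^n$ is irreducible and nonempty. Let $\Gamma=\Gamma(Y)$ be its coordinate semilattice, generated as an $\F$-semilattice by the classes $\bar x_1,\ldots,\bar x_n$ of the coordinate terms. By Theorem~\ref{th:coord=dis}, $\Gamma$ is $\F$-discriminated by $\F$, so in particular $\Hom_\F(\Gamma,\F)\neq\emptyset$; hence $(1)\Rightarrow(2)$ of Theorem~\ref{th:main_irred} gives an $\F$-embedding $\iota\colon\Gamma\hookrightarrow\F\ast F(T)$ for a finite $T=\{t_1,\ldots,t_m\}$. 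Write $\iota(\bar x_k)$ in normal form as $w_k(T)\b_k$ with $w_k(T)\in F(T)$ a coefficient-free term (possibly $w_k=1$) and $\b_k\in\F^\ast$. Now the points of $Y$ are exactly the $\F$-homomorphisms $\Gamma\to\F$, evaluated at the generators: $P=(\varphi(\bar x_1),\ldots,\varphi(\bar x_n))$ for $\varphi\in\Hom_\F(\Gamma,\F)$. Composing with $\iota$, every such $\varphi$ factors (not literally, but its effect on the $\bar x_k$ does) through an assignment $t_i\mapsto s_i\in\F$, giving $\varphi(\bar x_k)=w_k(s_1,\ldots,s_m)\b_k$; conversely every assignment $T\to\F$ extends the inclusion $\F\hookrightarrow\F$ to an $\F$-homomorphism $\F\ast F(T)\to\F$, which restricts to an $\F$-homomorphism $\Gamma\to\F$ and hence yields a point of $Y$. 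This gives~(\ref{eq:decompos_of_irred_syst_with_free_gener}) with the parameters $t_i$ ranging freely over $\F$.

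\medskip
\noindent\textbf{Expected obstacle.} The delicate point is the correspondence ``points of $Y$ $\leftrightarrow$ $\F$-homomorphisms $\Gamma(Y)\to\F$'' and, after transporting through $\iota$, the claim that \emph{every} free assignment $t_i\mapsto s_i$ produces a genuine point of $Y$ — i.e. that the parameterization has no hidden constraints. This works precisely because $F(T)\ast\F$ is a free object: an assignment of the $t_i$ always extends to an $\F$-endomorphism-type map into $\F$, and restricting to the subsemilattice $\Gamma$ costs nothing. One must also be slightly careful that distinct $\varphi$ can give the same tuple $(s_1,\ldots,s_m)$ on the $t_i$ but that this does not matter, since $Y$ is the \emph{set} of resulting tuples; and that the normal-form decomposition $\iota(\bar x_k)=w_k(T)\b_k$ is unambiguous, which is guaranteed by uniqueness of normal forms in $\F\ast F(T)$. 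Once these bookkeeping issues are settled, the theorem follows immediately from Theorem~\ref{th:main_irred}.
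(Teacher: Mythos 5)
Your route is the same as the paper's: deduce everything from Theorem~\ref{th:main_irred}, identify the points of $Y$ with the $\F$-homomorphisms $\Gamma(Y)\to\F$, and transport them through an $\F$-embedding $\iota\colon\Gamma(Y)\to\F\ast F(T)$ with $\iota(\bar x_k)=w_k(T)\b_k$. The genuine gap is in the containment $Y\subseteq\{(w_1(s_1,\ldots,s_m)\b_1,\ldots,w_n(s_1,\ldots,s_m)\b_n)\mid s_i\in\F\}$, i.e.\ in your claim that every $\varphi\in\Hom_\F(\Gamma(Y),\F)$ ``factors through an assignment $t_i\mapsto s_i\in\F$''. Freeness of $\F\ast F(T)$ gives maps \emph{out of} $\F\ast F(T)$ once values for the $t_i$ are chosen; that proves only the opposite containment (every assignment yields a point of $Y$). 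It does not show that a homomorphism defined only on the subsemilattice $\iota(\Gamma(Y))$ --- which in general does not contain the generators $t_i$ --- extends to, or agrees on the $\bar x_k$ with, one induced by an $\F$-valued assignment of the $t_i$. This is exactly the ``expected obstacle'' you name, and your appeal to freeness does not resolve it.

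The obstacle is real, not bookkeeping. Take $Y=\V_\F(x\a_2=y\a_1)=\{(\a_1t,\a_2t)\mid t\in\F^\ast\}$. Procedure~III only adds the consequences $x\leq\a_1$, $y\leq\a_2$, so by Lemma~\ref{l:about_Procedure_III} this $Y$ is irreducible, and $\Gamma(Y)$ embeds via $\bar x\mapsto t_1\a_1$, $\bar y\mapsto t_1\a_2$. The point $(\a_1,\a_2)\in Y$ corresponds to the assignment $t_1\mapsto 1\in\F^\ast\setminus\F$ and is not of the form $(s\a_1,s\a_2)$ with $s\in\F$; moreover no choice of coefficient-free $w_1,w_2$ and $\b_1,\b_2$ gives $Y$ as a set parameterized by $t_i\in\F$ (hitting $(\a_1,\a_2)$ forces $w_1,w_2$ to have disjoint variables and $\b_1\in\{1,\a_1\}$, $\b_2\in\{1,\a_2\}$, after which the assignment sending the variables of $w_1$ to $\a_1$ and those of $w_2$ to $\a_3$ produces a point outside $Y$). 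The same example undermines your $(\Leftarrow)$ step as written: $\{(s\a_1,s\a_2)\mid s\in\F\}$ is not algebraic at all, since every equation valid on it also holds at $(\a_1,\a_2)$. So the passage from the embedding of Theorem~\ref{th:main_irred} to the displayed parameterization needs the parameters to range over $\F^\ast$ (allowing $t_i=1$) together with an actual argument that every homomorphism of $\Gamma(Y)$ is induced by such an assignment; the paper's one-line proof is silent on this point, and your proposal asserts it rather than proves it, dismissing precisely the case ($t_i\mapsto1$) where it fails.
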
 
\begin{proof}
Let $S$ be the coordinate $\F$-semilattice of the set $\V_\F(\Ss)$. By Theorem~\ref{th:main_irred}, $Y$ is irreducible iff any element $x_i\in S$ is represented by an element $w_i(T)\b_i\in\F\ast F(T)$ and we come to the formula~(\ref{eq:decompos_of_irred_syst_with_free_gener}).
\end{proof}

Let $\Ss$ be a system with no equations of the form $\s(X)=\a$. Denote by $\mathrm{cf}(\Ss)$ the system of coefficient-free equations such that $\mathrm{cf}(\Ss)$ is obtained from $\Ss$ by deleting all constants. For example, if $\Ss=\{x_1\a_1\a_3=x_1x_2\a_2\}$ then $\mathrm{cf}(\Ss)=\{x_1=x_1x_2\}$.

By Theorem~\ref{th:main_irred}, any system $\Ss$ with irreducible solution set does not contain equations $\s(X)=\a$, hence $\mathrm{cf}(\Ss)$ is defined for any system with the irreducible solution set. 

\begin{corollary}
Let $\Ss_1$, $\Ss_2$ be two systems of $\F$-equations with irreducible solution sets, and moreover $\mathrm{cf}(\Ss_1)=\mathrm{cf}(\Ss_2)$. Then $\Ss_1,\Ss_2$ have the same coefficient-free terms $w_i(T)$ in the formula~(\ref{eq:decompos_of_irred_syst_with_free_gener}).
\label{cor:same_cf(S)}
\end{corollary}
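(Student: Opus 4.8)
The plan is to deduce the statement from the following claim about a single system: if $\Ss$ is a system of $\F$-equations with nonempty irreducible solution set, written as in formula~(\ref{eq:decompos_of_irred_syst_with_free_gener}) by coefficient-free terms $w_1(T),\ldots,w_n(T)$ and elements $\b_1,\ldots,\b_n\in\F^\ast$, then
\[
\V_\F(\mathrm{cf}(\Ss))=\{(w_1(T),w_2(T),\ldots,w_n(T))\mid t_i\in\F\}
\]
(we abbreviate the right-hand tuple by $w(T)$). Granting this claim, the corollary is immediate: $\mathrm{cf}(\Ss_1)=\mathrm{cf}(\Ss_2)$ gives $\V_\F(\mathrm{cf}(\Ss_1))=\V_\F(\mathrm{cf}(\Ss_2))$, so the coefficient-free terms produced for $\Ss_1$ and for $\Ss_2$ parameterize one and the same irreducible algebraic set; by Theorem~\ref{th:main_irred} those terms are intrinsic to that set — they are the images of $x_1,\ldots,x_n$ in the quotient of its coordinate semilattice by the congruence $s_1\sim s_2\Leftrightarrow s_1\a=s_2\b$ for some $\a,\b\in\F^\ast$, read inside a free product $\F\ast F(T)$ — hence the two lists coincide (up to renaming the $t_j$).

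For the claim, the inclusion $\supseteq$ is routine. Fix an equation $t_k(X)\a_k=s_k(X)\b_k^\pr$ of $\Ss$; here $t_k,s_k$ are genuine coefficient-free terms, because a system with nonempty irreducible solution set contains no equation $\s(X)=\a$. Substituting $x_i\mapsto w_i(T)\b_i$ turns this equation into the relation $t_k(w(T))\,\gamma_k\a_k=s_k(w(T))\,\delta_k\b_k^\pr$ between elements of $\F\ast F(T)$, a relation that becomes a true equality in $\F$ under every assignment $T\to\F$; here $\gamma_k$ (resp.\ $\delta_k$) denotes the product of the $\b_i$ over the variables occurring in $t_k$ (resp.\ $s_k$). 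Since $\F$ has infinite rank, sending the $t_j$ to pairwise distinct free generators missing from $\a_k,\b_k^\pr,\gamma_k,\delta_k$ and comparing generator sets forces both $t_k(w(T))=s_k(w(T))$ in $F(T)$ — that is, $w(T)\in\V_\F(\mathrm{cf}(\Ss))$ — and, as a by-product, the equalities $\gamma_k\a_k=\delta_k\b_k^\pr$ for all $k$.

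The inclusion $\subseteq$ is the core of the argument and, I expect, the main obstacle. Given $R=(r_1,\ldots,r_n)\in\V_\F(\mathrm{cf}(\Ss))$, the equalities $\gamma_k\a_k=\delta_k\b_k^\pr$ make $R^\pr:=(r_1\b_1,\ldots,r_n\b_n)$ a solution of every equation of $\Ss$, so $R^\pr=(w_1(\phi)\b_1,\ldots,w_n(\phi)\b_n)$ for some assignment $\phi\colon T\to\F$; thus $r_i$ and $w_i(\phi)$ carry the same free generators outside $\b_i$. It remains to replace $\phi$ by an assignment $\psi$ with $w_i(\psi)=r_i$ for every $i$. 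Writing $w_i=\prod_{j\in K_i}t_j$ with $K_i\neq\emptyset$ (no $x_i$ is equivalent on $\V_\F(\Ss)$ to a constant, again because $\V_\F(\Ss)$ is nonempty and irreducible), this becomes a combinatorial question about sets of free generators: it asks that for each free generator $g$ occurring in $R$ and each coordinate $i$ with $g$ occurring in $r_i$, there be a parameter $t_j$ with $j\in K_i$ such that $j\in K_{i^\pr}$ only for coordinates $i^\pr$ whose value $r_{i^\pr}$ already contains $g$. The work left is to show that membership of $R$ in $\V_\F(\mathrm{cf}(\Ss))$ forces exactly this. The key should be that irreducibility of $\V_\F(\Ss)$ makes $\mathrm{cf}(\Ss)$ ``rich enough'': the subsets of $\{1,\ldots,n\}$ that arise as $\{i:g\text{ occurs in }r_i\}$ for some $R\in\V_\F(\mathrm{cf}(\Ss))$ and some $g$ are precisely those respected by every equation of $\mathrm{cf}(\Ss)$, and one must check that these are exactly the unions of the parameter-usage sets $\{i:j\in K_i\}$ — equivalently, that the congruent closure of the coefficient-free system $\mathrm{cf}(\Ss)$ is the kernel of $x_i\mapsto K_i$. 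Reducible systems can violate this; the nontrivial point is that irreducible ones cannot.
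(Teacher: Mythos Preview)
Your route is longer than the paper's and leaves the decisive step unfinished. The paper's argument is essentially one sentence: in the construction of $(3)\Rightarrow(2)$ in Theorem~\ref{th:main_irred}, the term $w_i$ is the image of $x_i$ under $S\to S'=S/{\sim}\hookrightarrow F(T)$, where $\sim$ collapses all of $\F$ to the identity; the paper asserts that $S'$ so obtained is the coordinate semilattice of $\V_\F(\mathrm{cf}(\Ss))$ embedded in $F(T)$, whence the $w_i$ depend only on $\mathrm{cf}(\Ss)$ and the corollary follows at once.

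Your claim $\V_\F(\mathrm{cf}(\Ss))=\{(w_1(T),\ldots,w_n(T))\mid t_i\in\F\}$ is the solution-set reformulation of that same identification, and your proof of $\supseteq$ (together with the constant identities $\gamma_k\a_k=\delta_k\b_k^\pr$) is correct. But you do not prove $\subseteq$: you reduce it to showing that every occurrence set $\{i:g\text{ occurs in }r_i\}$ arising from a point $R\in\V_\F(\mathrm{cf}(\Ss))$ is a union of parameter-usage sets $\{i:j\in K_i\}$, and then stop with ``the work left is to show\ldots'' and ``one must check\ldots''. That is not a routine verification --- it is precisely the content of the identification $S'\cong\Gamma(\V_\F(\mathrm{cf}(\Ss)))$ on which the corollary rests. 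Moreover, when you finally derive the corollary from the claim you invoke the quotient construction of Theorem~\ref{th:main_irred} anyway to argue that the $w_i$ are intrinsic to the set --- exactly the paper's shortcut --- so the detour through the set equality would be redundant even if you completed it.
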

\begin{proof}
By the proof of Theorem~\ref{th:main_irred}, the terms $w_i$ define the embedding of a coordinate semigroup of the set $\mathrm{cf}(\Ss)$ into $F(T)$. The equality $\mathrm{cf}(\Ss_1)=\mathrm{cf}(\Ss_2)$ provides equal terms $w_i$ for the both systems.
\end{proof}

\begin{example}
Let us obtain a parameterization of the solution set of the system $\Ss=\{xy\a_1=z\a_2,y\leq\a_2\}$.

Firstly, we find the coefficient-free terms $w_i(T)$. Let us erase all constants from the equations of $\Ss$ (the inequality  $y\leq\a_2$ becomes a trivial equality $y=y$) and obtain $\mathrm{cf}(\Ss)=\{xy=z\}$. The coordinate semilattice of the set $\V_\F(\mathrm{cf}(\Ss))$ is presented as $S=\lb x,y,z|xy=z\rb$. The semilattice $S$ is embedded into $F(t_1,t_2)$ by $x\mapsto t_1$, $y\mapsto t_2$, $z\mapsto t_1t_2$. Thus, the elements $t_1,t_2,t_1t_2\in F(t_1,t_2)$ determine the terms $w_i(T)$ in the presentation of the set $Y$.

Now we obtain the estimation of variables by constants. By the properties of $\F$, the equality $xy\a_1=z\a_2$ implies $z\leq\a_1$, $xy\leq\a_2$. As $y\leq\a_2\in\Ss$, we have $xy\leq\a_2$ for any $x\in\F$, and there is no constraint for $x$. Thus, we obtain the correspondence
\[
x=t_1,\; y=t_2\a_2,\; z=t_1t_2\a_1,
\]   
and the solution set is 
\[
Y=\V_\F(\Ss)=\{(t_1,t_2\a_2,t_1t_2\a_1)|t_1,t_2\in\F\}.
\] 
\end{example}

\section{The consistency of finite systems over $\F$}
\label{sec:consistency}

In this section we establish a procedure which checks $\V_\F(\Ss)=\emptyset$ for a given finite system $\Ss$. 

\subsection{PROCEDURE I}

INPUT: a finite system of $\F$-equations $\Ss$ in variables $X=\{x_1,x_2,\ldots,x_n\}$.

\noindent OUTPUT: a subsystem $Sys\subseteq\Ss$, a set of variables $C$

\noindent STEP 0. Put $Sys:=\emptyset$, $C:=\emptyset$.

\noindent STEP $i$ ($i\geq 1$). Let $\Ss^\pr\subseteq\Ss$ be a set of all equations $\s(X)=\t(X)\in\Ss$ such that $\t(x)$ is either a constant or it depends only on the variables from the set $C$. Let $X_i=\{x_{i_1},x_{i_2},\ldots,x_{i_n}\}$ be all variables which occur in $\Ss^\pr$. Put
\[
C:=C\cup X_i,\; Sys:=Sys\cup\Ss^\pr
\]

If $\Ss^\pr=\emptyset$, terminate the procedure. 

The subsystem $Sys$ defined in Procedure I is called the \textit{core} of a system $\Ss$ and denoted by $\Core(\Ss)$. The variables occurring in equations of $\Core(\Ss)$ are \textit{fixed}.

\begin{example}
Let $\Ss$ be the next system
\[
\Ss=\{x_1x_2\a_2=\a_2,x_3\a_2=x_1\a_1,x_4\a_1=x_2x_3\a_2,x_5x_1\a_1\a_4=x_5x_4\a_2\}.
\] 

If we apply to $\Ss$ Procedure I, we consequently obtain the next sets of variables $X_1=\{x_1,x_2\}$, $X_2=\{x_3\}$, $X_3=\{x_4\}$ and the core is
\begin{equation}
\Core(\Ss)=\{x_1x_2\a_2=\a_2,x_3\a_2=x_1\a_1,x_4\a_1=x_2x_3\a_2\}.
\label{eq:core_from_example}
\end{equation}
The fixed variables of $\Ss$ are $x_1,x_2,x_3,x_4$.

\label{ex:system_for_consistency}
\end{example}

\begin{theorem}
Suppose for a finite system $\Ss(x_1,x_2,\ldots,x_m)$ Procedure I constructed a set $C$ and a core $\Core(\Ss)\subseteq\Ss$ in variables $x_1,x_2,\ldots,x_k$ ($k\leq n$) (w.l.o.g one can assume that the variables of $\Core(\Ss)$ have the first indexes). Let $\Ss_0=\Ss\setminus \Core(\Ss)$. Then:
\begin{enumerate} 
\item the solution set of $\Core(\Ss)$ is finite, and moreover it can be algorithmically found;
\item $\Ss$ is consistent iff so is $\Core(\Ss)$;
\item if $\V_\F(\Core(\Ss))=\{P_1,P_2,\ldots,P_m\}$, where 
\[
P_i=(p_{i1},p_{i2},\ldots,p_{ik})
\]
then
\begin{equation}
\V_\F(\Ss)=\bigcup_{i=1}^m\V_\F\left(\Ss_0\bigcup_{j=1}^k\{x_j=p_{ij}\}\right).
\label{eq:union_of_S_without_core}
\end{equation}

\end{enumerate}
\label{th:consistency}
\end{theorem}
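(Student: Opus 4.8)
The plan is to analyze Procedure~I structurally, exploiting the fact that at each step the "right-hand" terms $\t(X)$ depend only on already-fixed variables (or are constants), so the core is built in a dependency-ordered fashion. The three claims are naturally proved in the order (1), then (3), then (2), since the last follows immediately from the first two.

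For claim (1), I would argue by induction on the step number $i$. At STEP~$1$, the subsystem $\Ss^\pr$ consists of equations $\s(X)=\t(X)$ with $\t(X)$ a constant $\a\in\F$; since all involved variables lie in $X_1$, each such equation has the form (type~1 from Section~3) $x\e\c=\a\d$ restricted appropriately, whose solution set in each variable is finite (or empty). More carefully, $\s(X_1)=\a$ forces every variable in $\s$ to be $\leq\a$, hence to range over the finite set $\{\a'\in\F\mid\a'\geq\a\}$; so the solution set of the STEP~$1$ equations is finite and algorithmically computable by brute enumeration. Inductively, once the variables fixed before STEP~$i$ range over a known finite set, substitute each of the finitely many partial assignments into the STEP~$i$ equations: each such equation becomes $\s(X_i)=\t$ where $\t$ is now a \emph{constant} in $\F$ (an element, after substitution), so again every new variable is bounded above by that constant and ranges over a finite set. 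Taking the union over the finitely many partial assignments keeps everything finite, and the whole process terminates because $X$ is finite. This yields a finite, algorithmically enumerable $\V_\F(\Core(\Ss))$.

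For claim (3), the formula~(\ref{eq:union_of_S_without_core}) is essentially a case split on the value of the fixed variables. Every solution $P\in\V_\F(\Ss)$ restricts to a solution of $\Core(\Ss)$ on the first $k$ coordinates, hence equals $P_i$ on those coordinates for some $i$, and then $P$ satisfies $\Ss_0$ together with $x_j=p_{ij}$; conversely any solution of the right-hand side satisfies $\Core(\Ss)$ (because $P_i$ does) and $\Ss_0$, hence all of $\Ss$. This is a routine set-theoretic verification once one notes $\Ss=\Core(\Ss)\cup\Ss_0$.

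Claim (2) then follows: if $\Core(\Ss)$ is inconsistent, so is $\Ss\supseteq\Core(\Ss)$; conversely, if $\Core(\Ss)$ is consistent pick any $P_i\in\V_\F(\Core(\Ss))$ and I must produce a solution of $\Ss$ extending it. Here is where the key obstacle lies: after fixing the core variables to $p_{ij}$, the remaining system $\Ss_0$ has the property that \emph{every} equation in it has its right-hand term $\t(X)$ depending on at least one \emph{non}-fixed variable (otherwise it would have been pulled into the core), and symmetrically for the left-hand term by the same argument applied to the reversed equation. The plan is to show such a "coreless" system is always consistent by sending every non-fixed variable to a common element: take $z=\prod\{\a_i : \a_i \text{ occurs in } \Ss\}$ times a fresh generator, or more simply send every free (non-fixed) variable $x_j$ to the product of all constants appearing in $\Ss_0$ times one extra common generator $\a_\ast$ not in $\Ss$. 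For an equation $u(X)\a=v(X)\b$ where both $u$ and $v$ genuinely involve a non-fixed variable, after this substitution both sides collapse to the same element (the common value absorbs the constants $\a,\b$ and dominates the fixed-variable contributions), so the equation holds. The main thing to check carefully is that the definition of $\Ss^\pr$ in Procedure~I — which only tests the right-hand side $\t(X)$ — really does guarantee, for equations left \emph{outside} the core, that the structurally relevant term involves a free variable; this requires observing that an $\F$-equation $\s=\t$ and $\t=\s$ are the same equation, so "one side depends only on fixed variables/constants" failing means \emph{both} sides involve a free variable. Once that symmetry point is nailed down, the common-value substitution finishes consistency, and combined with (1) and (3) the theorem follows.
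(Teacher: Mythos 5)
Your proposal is correct in its main line and follows essentially the same route as the paper: claim (1) by stepwise induction with the new variables bounded above by a constant and found by brute enumeration, claim (3) as a routine case split, and claim (2) by substituting one large absorbing element for all non-fixed variables, justified by exactly the key observation you flag, namely that every equation left outside $\Core(\Ss)$ has a non-fixed variable on both sides (the paper's absorbing element $\c$ is the product of all letters occurring in $\V_\F(\Core(\Ss))$ together with all constants of $\Ss_0$, which your first choice --- all generators occurring in $\Ss$, optionally times a fresh generator --- dominates, so it works equally well). One caveat: keep that first choice and drop the ``more simply'' variant, since the product of the constants of $\Ss_0$ alone need not dominate the core values of fixed variables that still occur in $\Ss_0$; for instance with $\Core(\Ss)=\{x_1\a_1=\a_1\}$ (so $x_1=\a_1$) and $\Ss_0=\{x_2x_1\a_2=x_2\a_2\}$, the assignment $x_2=\a_2\a_\ast$ gives $\a_1\a_2\a_\ast\neq\a_2\a_\ast$, whereas $x_2=\a_1\a_2\a_\ast$ works.
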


Let us explain the statement of Theorem~\ref{th:consistency} by the system from Example~\ref{ex:system_for_consistency}

\begin{example}
The solution set of~(\ref{eq:core_from_example}) is finite:
\[
\V_\F(\Core(\Ss))=\{(\a_2,\a_2,\a_1,\a_2),(\a_2,\a_2,\a_1,\a_1\a_2),(\a_2,\a_2,\a_1\a_2,\a_2),(\a_2,\a_2,\a_1\a_2,\a_1\a_2)\}
.
\]
The consistency of $\Core(\Ss)$ admits a solution of the whole system $\Ss$. Clearly,
$(\a_2,\a_2,\a_1,\a_2,\a_1\a_2\a_4)\in\V_\F(\Ss)$ that adjusts with the second statement of Theorem~\ref{th:consistency}.

By formula~(\ref{eq:union_of_S_without_core}), the system $\Ss$ really equals to the union:
\begin{multline*}
\Ss=\begin{cases}
x_1=\a_2,\\
x_2=\a_2,\\
x_3=\a_1,\\
x_4=\a_2,\\
x_5x_1\a_1\a_4=x_5x_4\a_2,x_1
\end{cases}
\cup
\begin{cases}
x_1=\a_2,\\
x_2=\a_2,\\
x_3=\a_1,\\
x_4=\a_1\a_2,\\
x_5x_1\a_1\a_4=x_5x_4\a_2,x_1
\end{cases}
\cup
\begin{cases}
x_1=\a_2,\\
x_2=\a_2,\\
x_3=\a_1\a_2,\\
x_4=\a_2,\\
x_5x_1\a_1\a_4=x_5x_4\a_2,x_1
\end{cases}\\
\cup
\begin{cases}
x_1=\a_2,\\
x_2=\a_2,\\
x_3=\a_1\a_2,\\
x_4=\a_1\a_2,\\
x_5x_1\a_1\a_4=x_5x_4\a_2,x_1
\end{cases}.
\end{multline*}

\end{example}

Now we begin to prove Theorem~\ref{th:consistency}.

\begin{proof}
\begin{enumerate}
\item By the definition of Procedure I, the set of variables $X_c$ of $\Core(\Ss)$ is the disjoint union
\[
X_c=X_1\cup X_2\cup\ldots\cup X_l.
\]

Let $\Ss_i(X_i)\subseteq\Ss$ be the system which was added to $\Core(\Ss)$ at the $i$-th step of Procedure I. Let us prove that $|\V_\F(\Ss_1)|<\infty$.

By the choice of $\Ss_1$, it contains equation of the form $s(X_1)\a=\b$. 

By the properties of $\F$, any variable $x$ occurring in $s(X_1)$ should satisfy $x\geq\b$. As the set $\b\uparrow=\{x|x\geq\b\}\subseteq\F$ is finite, the solution set of $s(X_1)\a=\b$ is also finite. Thus, we obtain $|\V_\F(\Ss_1)|<\infty$. As we should seek a solution of $s(X_1)\a=\b$ in the finite set $\b\uparrow$, this problem is obviously algorithmically decidable.

The finiteness of the solution set of $\Ss_i$ for $1< i\leq l$ (and the algorithmic decidability) can be easily proven by the induction.

\item As $\Core(\Ss)\subseteq\Ss$, the equality $\Core(\Ss)=\emptyset$ implies $\V_\F(\Ss)=\emptyset$. 

Suppose now $P=(p_1,p_2,\ldots,p_k)\in\V_\F(\Core(\Ss))$. Put $\a$ be the product of all letters $\a_j$ which occur in $\V_\F(\Core(\Ss))$ and $\b$ be the product of all constants from all equations of $\Ss_0$ (for instance, in Example~\ref{ex:system_for_consistency} we have $\a=\a_1\a_2$, $\b=\a_1\a_2\a_4$). The definitions of $\a,\b$ are well-defined, as the sets $\V_\F(\Core(\Ss))$, $\Ss_0$ are finite. Further, we put $\c=\a\b$. 

Remind that the system $\Ss_0$ depends on the variables $x_{k+1},x_{k+2},\ldots,x_n$. Let us prove $Q=(\c,\c,\ldots,\c)\in\V_\F(\Ss_0)$. Take an arbitrary equation 
\[
s(x_{k+1},x_{k+2},\ldots,x_n)\d=t(x_{k+1},x_{k+2},\ldots,x_n)\e\in\Ss_0
\]
and obtain
\[
s(\c,\c,\ldots,\c)\d=t(\c,\c,\ldots,\c)\e\Leftrightarrow \c\d=\c\e.
\]
By the choice of $\c$, $\c\leq\d$, $\c\leq\e$, and we come to the true equality $\c\d=\c\e\Leftrightarrow\c=\c$.

Thus, $Q\in\V_\F(\Ss_0)$. Let us joint the points $P,Q$ by
\[
R=(p_1,p_2,\ldots,p_k,\c,\c,\ldots,\c).
\]   
As the systems $\Core(\Ss),\Ss_0$ have the disjoint sets of variables, the point $R$ is the solution of the whole system $\Ss$. Hence, $\Ss$ is consistent.

\item Straightforward. 

\end{enumerate}

\end{proof}

\section{Decompositions of algebraic sets}
\label{sec:algorithm}

We introduce two procedures that form the Decomposition algorithm defined below.

\subsection{PROCEDURE II}

INPUT: a finite system $\Ss$ which does not contain any equation of the type $\s(X)=\b$ ($\b\in\F$).

\noindent OUTPUT: a set of systems $D=\{\Ss_1,\Ss_2,\ldots,\Ss_m\}$ such that 
\[
\V_\F(\Ss)=\bigcup_{i=1}^m\V_\F(\Ss_i).
\]
\noindent STEP 0. Put $D:=\Ss$.

\noindent STEP 1. Find an equation $\s(X)\a=\t(X)\a$ such that
\begin{enumerate}
\item $\a\in\F$;
\item $\s(X)\a=\t(X)\a$ belongs to all system from $D$;
\item the constants of $\s(X)$ and $\t(X)$ are co-prime.
\end{enumerate}
If such equation does not exists, terminate the procedure. By formula $\varphi_{\a}$, the given equation is equivalent to the union 
\[
\s(X)\a=\t(X)\a\sim \bigcup_{\a^\pr,\a^{\pr\pr}\geq\a}\s(X)\a^\pr=\t(X)\a^{\pr\pr}
\] 
for all co-prime $\a^\pr,\a^{\pr\pr}$.

Replace any $\Ss^\pr\in D$ to the collection systems
\[
C(\Ss^\pr)=\{\Ss_{\a^\pr\a^{\pr\pr}}|\a^\pr,\a^{\pr\pr}\geq\a,\mbox{ and }\a^\pr,\a^{\pr\pr} \mbox{ are co-prime}\},
\]
\[
\Ss_{\a^\pr\a^{\pr\pr}}=(\Ss^\pr\setminus\{\s(X)\a=\t(X)\a\})\cup\{\s(X)\a^\pr=\t(X)\a^{\pr\pr}\}.
\]

\begin{example}
Let us explain the work of Procedure II at the next system:
\[
\Ss=\begin{cases}
xy\a_1=x\a_1\a_2,\\
xz\a_1\a_2\a_3=y\a_2\a_3
\end{cases}.
\]

Firstly, $D=\{\Ss\}$ and take $xy\a_1=x\a_1\a_2\in\Ss$ which is equivalent to the union
\[
xy=x\a_1\a_2 \cup xy\a_1=x\a_2 \cup xy=x\a_2.
\]
Thus, we have three systems instead of $\Ss$:
\[
\Ss_1=\{xy=x\a_1\a_2,xz\a_1\a_2\a_3=y\a_2\a_3\},
\]
\[
\Ss_2=\{xy\a_1=x\a_2,xz\a_1\a_2\a_3=y\a_2\a_3\},
\]
\[
\Ss_3=\{xy=x\a_2,xz\a_1\a_2\a_3=y\a_2\a_3\},
\]
and $D=\{\Ss_1,\Ss_2,\Ss_3\}$ after the first step of the procedure.

At the second step we take the equation $xz\a_1\a_2\a_3=y\a_2\a_3$. It is equivalent to the union:
\begin{multline}
xz\a_1=y\a_2\a_3\cup xz\a_1\a_2\a_3=y\cup xz\a_1=y\cup xz\a_1\a_2=y\a_3\cup xz\a_1\a_2=y\cup\\
xz\a_1\a_3=y\a_2\cup xz\a_1\a_3=y\cup xz\a_1=y\a_2\a_3\cup xz\a_1=y\a_2\cup xz\a_1=y\a_3.
\label{multlinchik}
\end{multline}

The final set $D$ consists of 30 systems $\Ss_{ij}$, $1\leq i\leq 3$, $1\leq j\leq 10$, where 
\[
\Ss_{ij}=(\Ss_i\setminus\{xz\a_1\a_2\a_3=y\a_2\a_3\})\cup\{\mbox{the $j$-th equation from~(\ref{multlinchik})}\}.
\] 

\end{example}

\subsection{PROCEDURE III}

INPUT: a finite system $\Ss$ of $N$ equations which does not contain neither $\s(X)=\b$ nor $\s(X)\b=\t(X)\b$ ($\a\in\F^\ast$, $\b\in\F$).

\noindent OUTPUT: a set of systems $D=\{\Ss_1,\Ss_2,\ldots,\Ss_m\}$ such that 
\[
\V_\F(\Ss)=\bigcup_{i=1}^m\V_\F(\Ss_i).
\]
\noindent STEP 0. Put $D:=\Ss$.

\noindent STEP $i$ ($1\leq i\leq N$). Take the $i$-th equation in $\Ss$: $s(X)\a=t(X)\b\in\Ss$, where $\a,\b\in\F^\ast$. By the condition, $\a,\b$ are co-prime (here we assume that $1\in\F^\pr$ and any $\c\in\F^\ast$ are co-prime). The semilattice theory gives $s(X)\leq\b$, $t(X)\leq\a$. Suppose 
\[
\a=\prod_{i\in\I_\a}\a_{i},\; \b=\prod_{i\in\I_\b}\a_{i},\; s(X)=\prod_{i\in\I_s}x_i,\; t(X)=\prod_{i\in\I_t}x_i.
\]

The inequalities $s(X)\leq\b$, $t(X)\leq\a$ implies
\[
t(X)\leq\a_i (i\in\I_\a),\; s(X)\leq\a_i (i\in\I_\b).
\]

By formula~(\ref{eq:psi_i}), we have
\begin{equation}
\begin{cases}
\bigcup_{j\in\I_t}(x_j\leq\a_i)\mbox{ for any }i\in\I_\a,\\
\bigcup_{j\in\I_s}(x_j\leq\a_i)\mbox{ for any }i\in\I_\b.
\end{cases}
\label{eq:trololo}
\end{equation}
Using the distributivity law for algebraic sets, one can rewrite~(\ref{eq:trololo}) as a union 
\[
\bigcup_{\al\in M} \Ss_\al,
\]
where $\Ss_\al$ is a system of equations of the type $x_i\leq\a_j$ ($i\in\I_s\cup\I_t$, $j\in\I_\a\cup\I_\b$).

Replace any $\Ss^\pr\in D$ by the collection of the systems 
\[
\{\Ss^\pr\cup\Ss_\al|\al\in M\}.
\] 

\begin{example}
\label{ex:for_procedure_III}
Let us demonstrate the work of Procedure III at the next example.

Suppose $\Ss=\{x_2\a_1=x_1x_3\a_2,x_2x_4\a_3=x_3\a_4\a_5\}$. At the first step of the procedure we deal with $x_2\a_1=x_1x_3\a_2$ and obtain $x_2\leq\a_2$, $x_1x_3\leq\a_1$. We have
\[
\begin{cases}
x_2\leq\a_2,\\
x_1\leq\a_1\cup x_3\leq\a_1
\end{cases}
\] 
or equivalently
\[
(x_2\leq\a_2)(x_1\leq\a_1)\cup(x_2\leq\a_2)(x_3\leq\a_1).
\]

Thus, after the first step we have
\begin{multline*}
D=\{\{x_2\a_1=x_1x_3\a_2,x_2x_4\a_3=x_3\a_4\a_5,x_2\leq\a_2,x_1\leq\a_1\},\\
\{x_2\a_1=x_1x_3\a_2,x_2x_4\a_3=x_3\a_4\a_5,x_2\leq\a_2,x_3\leq\a_1\}\}=\{\Ss_1,\Ss_2\}.
\end{multline*}

At the second step we take $x_2x_4\a_3=x_3\a_4\a_5$, hence $x_2x_4\leq\a_4$, $x_2x_4\leq\a_5$, $x_3\leq\a_3$ and
\[
\begin{cases}
x_2\leq\a_4\cup x_4\leq\a_4,\\
x_2\leq\a_5\cup x_4\leq\a_5,\\
x_3\leq\a_3
\end{cases}
\]
or equivalently
\begin{multline*}
(x_2\leq\a_4)(x_2\leq\a_5)(x_3\leq\a_3)\cup(x_2\leq\a_4)(x_4\leq\a_5)(x_3\leq\a_3)\cup
(x_4\leq\a_4)(x_2\leq\a_5)(x_3\leq\a_3)\cup\\
(x_4\leq\a_4)(x_4\leq\a_5)(x_3\leq\a_3).
\end{multline*}

Finally, the set $D$ after the second step becomes
\begin{multline*}
D=\{\Ss_1\cup\{x_2\leq\a_4,x_2\leq\a_5,x_3\leq\a_3\},\Ss_1\cup\{x_2\leq\a_4,x_4\leq\a_5,x_3\leq\a_3\},\\
\Ss_1\cup\{x_4\leq\a_4,x_2\leq\a_5,x_3\leq\a_3\},\Ss_1\cup\{x_4\leq\a_4,x_4\leq\a_5,x_3\leq\a_3\},\\
\Ss_2\cup\{x_2\leq\a_4,x_2\leq\a_5,x_3\leq\a_3\},\Ss_2\cup\{x_2\leq\a_4,x_4\leq\a_5,x_3\leq\a_3\},\\
\Ss_2\cup\{x_4\leq\a_4,x_2\leq\a_5,x_3\leq\a_3\},\Ss_2\cup\{x_4\leq\a_4,x_4\leq\a_5,x_3\leq\a_3\}
\}.
\end{multline*}

One can rewrite $D$ in a simpler form:
\begin{multline*}
D=\{\Ss\cup\{x_1\leq\a_1,x_2\leq\a_2\a_4\a_5,x_3\leq\a_3\},\Ss\cup\{x_1\leq\a_1,x_2\leq\a_2\a_4,x_3\leq\a_3,x_4\leq\a_5\},\\
\Ss\cup\{x_1\leq\a_1,x_2\leq\a_2\a_5,x_3\leq\a_3,x_4\leq\a_4\},\Ss\cup\{x_1\leq\a_1,x_2\leq\a_2,x_3\leq\a_3,x_4\leq\a_4\a_5\},\\
\Ss\cup\{x_2\leq\a_2\a_4\a_5,x_3\leq\a_1\a_3\},\Ss\cup\{x_2\leq\a_2\a_4,x_3\leq\a_1\a_3,x_4\leq\a_5\},\\
\Ss\cup\{x_2\leq\a_2\a_5,x_3\leq\a_1\a_3,x_4\leq\a_4\},\Ss\cup\{x_2\leq\a_2\a_5,x_3\leq\a_1\a_3,x_4\leq\a_4\a_5\}\}.
\end{multline*}

\end{example}

\begin{lemma}
Let $\bar{\Ss}$ be a finite system which does not contain neither $\s(X)=\b$ nor $\s(X)\b=\t(X)\b$ ($\a\in\F^\ast$, $\b\in\F$). Hence, Procedure III constructs a set $D$ which consists of systems with irreducible solution sets.
\label{l:about_Procedure_III}
\end{lemma}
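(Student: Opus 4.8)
The plan is to show that each system in the output set $D$ satisfies the hypotheses of the parameterization theorem (formula~(\ref{eq:decompos_of_irred_syst_with_free_gener})), or equivalently, that the coordinate semilattice of each such system $\F$-embeds into $\F\ast F(T)$, which by Theorem~\ref{th:main_irred} is equivalent to irreducibility. The natural route is the third characterization in Theorem~\ref{th:main_irred}: I would verify that the coordinate semilattice $\Gamma$ of each output system models every $\psi_i$ and every $\varphi_i$. The formulas $\psi_i$ hold automatically in any $\F$-semilattice (they hold in $\F$ and, being universal, descend to $\ucl(\F)$, hence in particular to coordinate semilattices, which lie in $\ucl(\F)$ by Theorem~\ref{th:coord->ucl}). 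So the real content is $\varphi_i$.

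The key structural observation is what Procedures II and III accomplish together. After Procedure II, no output system contains an equation of the form $\s(X)\a=\t(X)\a$ with co-prime constant parts; after Procedure III, each output system is built from the original (now coefficient-cancelled) equations together with a batch of inequalities $x_j\leq\a_k$. I would argue that in such a system every remaining "genuine" equation $\s(X)\a = \t(X)\b$ has co-prime $\a,\b$ (this is exactly the precondition of Procedure III, preserved because Procedure III only adds inequalities), and that by formula~(\ref{eq:varphi_ab}) such an equation forces $\s(X)\leq\b$ and $\t(X)\leq\a$ — i.e., it is \emph{already implied by} the inequality constraints it generates. Consequently, in each output system the coefficient-free part of the terms is governed purely by coefficient-free equations $s(X)=t(X)$ (from equations with $\a=\b=1$) while the constants enter only through inequalities $x_j\leq\a_k$. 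For such a presentation one checks directly that $\varphi_i$ holds: if $s\a_i = t\a_i$ in $\Gamma$, then since the only constraints are coefficient-free equations plus upper bounds by constants, the congruence closure forces $s\a_i = t\a_i$ to already be a "trivial" consequence, giving one of $s\a_i=t$, $s=t\a_i$, $s=t$. This is essentially the same computation as in Lemma~\ref{l:formulas_hold_in_F} carried out in the quotient; I would formalize it by exhibiting the explicit $\F$-embedding of $\Gamma$ into $\F\ast F(T)$, sending each $x_j$ to $w_j(T)\b_j$ where $w_j(T)$ comes from embedding the coordinate semilattice of $\mathrm{cf}(\Ss_\ell)$ into $F(T)$ (Theorem~\ref{th:embedd}, as in the proof of Theorem~\ref{th:main_irred}) and $\b_j$ is the product of all $\a_k$ with $x_j\leq\a_k$ in the system.

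Concretely I would proceed as follows. First, record that after Procedures II and III each $\Ss_\ell\in D$ has the form $\mathrm{cf}(\Ss_\ell)\cup\{x_j\leq\b_j \mid j\}$ up to equivalence, where $\mathrm{cf}(\Ss_\ell)$ is a coefficient-free system: the equations with co-prime nontrivial constant parts are subsumed by the inequalities they spawned in Procedure III (via~(\ref{eq:varphi_ab})), and after deleting constants they become coefficient-free equations; equations that were already coefficient-free stay; and there are no equations $\s(X)=\b$ by hypothesis. Second, let $S_0$ be the coordinate semilattice of $\mathrm{cf}(\Ss_\ell)$; by Corollary~\ref{cor:same_cf(S)}-style reasoning (really by the $(1)\Rightarrow(2)$ direction of Theorem~\ref{th:main_irred} applied to coefficient-free systems, which are always irreducible since their solution sets are closed under the diagonal action and the argument of \S$(2)\Rightarrow(1)$ applies) $S_0$ embeds into $F(T)$ via $x_j\mapsto w_j(T)$. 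Third, define $\lambda\colon \Gamma(\V_\F(\Ss_\ell))\to \F\ast F(T)$ by $x_j\mapsto w_j(T)\b_j$ and $\a\mapsto\a$, check it is a well-defined $\F$-homomorphism (the defining relations of $\Gamma$ are the coefficient-free relations, which hold because $w_j$ does, plus the inequalities $x_j\leq\b_j$, which hold because $w_j(T)\b_j\leq\b_j$), and check injectivity exactly as in the injectivity argument of \S$(3)\Rightarrow(2)$: distinct elements of $\Gamma$ differ either in their coefficient-free part (detected by $w_j$'s, i.e. by $S_0\hookrightarrow F(T)$) or in a constant (detected by the $\b_j$'s). Then Theorem~\ref{th:main_irred}, part~(2)$\Rightarrow$(1), gives irreducibility.

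The main obstacle I anticipate is the first step: proving rigorously that after Procedure III every remaining equation with a nontrivial constant is genuinely redundant given the inequalities, so that $\Ss_\ell$ really is equivalent to $\mathrm{cf}(\Ss_\ell)$ together with the collected inequalities. One has to be careful that Procedure III's hypothesis — no equation of the form $\s(X)\b=\t(X)\b$ — guarantees that in $s(X)\a=t(X)\b$ the constants $\a,\b$ are co-prime (this is where Procedure II's output condition is used as Procedure III's input condition), and then that~(\ref{eq:varphi_ab}) together with the spawned inequalities $x_j\leq\a_k$ truly recovers the original equation: given $s(X)\leq\b$ and $t(X)\leq\a$ one gets $s(X)\a = s(X)$ times... — one must check $s(X)\a=t(X)\b$ follows, i.e. that $s(X)\a$ and $t(X)\b$ have the same constant content ($\I_s\supseteq\I_\b$, $\I_t\supseteq\I_\a$, and $\a,\b$ co-prime force both sides to equal $s(X)\cdot t(X)$-type normal forms). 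This bookkeeping with index sets is routine but is the crux; everything after it is a transcription of the argument already given for Theorem~\ref{th:main_irred}.
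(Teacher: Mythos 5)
There are two genuine gaps, and both sit at load-bearing points of your plan. First, your claim that the formulas $\psi_i$ hold ``automatically'' in the coordinate semilattice because coordinate semilattices lie in $\ucl(\F)$ is circular: Theorem~\ref{th:coord->ucl} places in $\ucl(\F)$ only coordinate semilattices of \emph{irreducible} sets, which is exactly what the lemma is asking you to prove. Moreover the claim is false for coordinate semilattices in general: for $Y=\V_\F(xy\a_1=xy)$ the coordinate semilattice satisfies $xy\leq\a_1$ but neither $x\leq\a_1$ nor $y\leq\a_1$ (each fails at some point of $Y$), so $\psi_1$ fails there. This is precisely why the paper's proof spends its second half verifying $\psi_k$ by analysing derivations in the congruent closure $[\Ss]$, using the fact that Procedure III inserted an inequality $x\leq\a_k$ for a variable $x$ of $t_{i_0}(X)$ whenever an equation $s_{i_0}(X)\a_k=t_{i_0}(X)$ occurs; that verification cannot be dispensed with.

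Second, the structural claim you yourself flag as the crux --- that each output system $\Ss_\ell$ is equivalent to $\mathrm{cf}(\Ss_\ell)$ together with the spawned inequalities --- is false, so the postponed ``bookkeeping'' cannot be completed. Take $\Ss=\{x\a_1=y\a_2\}$: Procedure III returns the single system $\Ss_1=\{x\a_1=y\a_2,\ x\leq\a_2,\ y\leq\a_1\}$, whereas $\mathrm{cf}(\Ss_1)$ plus the inequalities is $\{x=y,\ x\leq\a_2,\ y\leq\a_1\}$; the point $(\a_2,\a_1)$ satisfies the former but not the latter. In general the inequalities only let you rewrite $s(X)\a=t(X)\b$ as $s(X)\a\b=t(X)\a\b$, not as $s(X)=t(X)$, and formula~(\ref{eq:varphi_ab}) gives consequences of the equation, not a converse. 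Hence the semilattice presented by $\mathrm{cf}(\Ss_\ell)$ plus inequalities is a proper quotient of the coordinate semilattice of $\V_\F(\Ss_\ell)$ (its solution set is a proper subset), and your map $x_j\mapsto w_j(T)\b_j$ is being checked against the wrong presentation: the well-definedness and injectivity argument you sketch would establish irreducibility of the smaller set, not of $\V_\F(\Ss_\ell)$. To control the true coordinate semilattice you need the normal-form analysis of the paper's $(3)\Rightarrow(2)$ argument (the homomorphisms $g$ and $h$), which presupposes $S\models\Sigma$ --- that is, the derivation-by-derivation verification of $\varphi_\a$ and $\psi_k$ inside $[\Ss_\ell]$ that constitutes the paper's proof and that your outline skips. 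The one ingredient you and the paper share is the consistency step: $\Core(\Ss_\ell)=\emptyset$, hence $\Hom_\F(S,\F)\neq\emptyset$ by Theorem~\ref{th:consistency}.
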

\begin{proof}
Let $\Ss\in D$ and $X=\{x_1,x_2,\ldots,x_n\}$ be the variables which occur in $\Ss$. Define an $\F$-semilattice $S$ with generators $X$ and relations $\Ss$. 

By condition, $\Ss$ does not contain equations $\s(X)\a=\b$, hence, we have $\Core(\Ss)=\emptyset$. By Theorem~\ref{th:consistency}, $\Ss$ is consistent, or equivalently, the semilattice $S$ has a non-empty set of $\F$-homomorphisms $\Hom_\F(S,\F)$.

Let us check $S\models\Sigma$. Suppose there exists words $\s(X),\t(X)\in S$ with $\s(X)\a=\t(X)\a\in[\Ss]$. 
Without loss of generality, one can assume that $\a$ is maximal among all $\b\in\F$ which occur in equations $\s^\pr(X)\b=\t^\pr(X)\b\in[\Ss]$. 

By the definition of $[\Ss]$, there exist equations $\s_i(X)=\t_i(X)\in\Ss$, $1\leq i\leq m$, and trivial equality $\rho(X)=\rho(X)$ such that
\[
\s(X)\a=\s_1(X)\s_2(X)\ldots \s_m(X)\rho(X),
\]
\[
\t(X)\a=\t_1(X)\t_2(X)\ldots \t_m(X)\rho(X).
\]

If $\rho(X)=r(X)\b$, we have
\[
\s_1(X)\s_2(X)\ldots \s_m(X)r(X)=\s(X)\a^\pr,
\]
\[
\t_1(X)\t_2(X)\ldots \t_m(X)r(X)=\t(X)\a^\pr
\]
for some $\a^\pr$ with $\a^\pr\b=\a$. Hence, $\s(X)\a^\pr=\t(X)\a^\pr\in[\Ss]$ for some $a^\pr\geq\a$ that contradicts with the choice of $\a$.
Thus, $\rho(X)$ is coefficient-free.

Let $\s_i(X)=s_i(X)\b_i$, $\t_i(X)=t_i(X)\c_i$, hence 
\begin{equation}
\label{eq:qqqqq}
\prod_i \b_i=\prod_i\c_i=\a.
\end{equation}

By the definition of Procedure III, $[\Ss]$ contains $s_i(X)\leq\c_i$, $t_i(X)\leq\b_i$, and from~(\ref{eq:qqqqq}) it implies 
\[
\prod_i s_i(X)\leq\a,\;\prod_i t_i(X)\leq\a.
\]

Therefore, 
\[
\s(X)\a=\prod_{i}s_i(X)\a=\prod_{i}s_i(X),
\]
\[
\t(X)\a=\prod_{i}t_i(X)\a=\prod_{i}t_i(X),
\]
and the equality $\s(X)=\t(X)$ holds. 

Remember, in the origin of the proof we assume $\s(X)\a=\t(X)\a$, hence the implication $(\s(X)\a=\t(X)\a)\to(\s(X)=\t(X))$ holds for any $X$. 
Thus, $S\models\varphi_{\a}$.

\medskip

Let us check $S\models\psi_k$. Assume the converse: $\psi_k$ does not hold in $S$, i.e. there exists $\s(X)\leq\a_k\in[\Ss]$ and all variables of $\s(X)$ do not satisfy $x\leq\a_k\in[\Ss]$.  As $\s(X)\leq\a_k\Leftrightarrow \s(X)\a_k=\s(X)$ and $S\models \varphi_\a$, we may assume that $\s(X)$ is coefficient-free. Further we denote $\s(X)=s(X)$ and we have 
\[
s(X)\a_k=s(X)\in[\Ss].
\]

There exists equations $\s_i(X)=\t_i(X)\in\Ss$ and a trivial equality $\rho(X)=\rho(X)$ such that
\[
s(X)\a_k=\rho(X)\prod_i^k \s_i(X), \;s(X)=\rho(X)\prod_i^k \t_i(X).
\]
It follows that 
\begin{enumerate}
\item the terms $\rho(X)=r(X),\t_i(X)=t_i(X)$ are coefficient-free;
\item there exists $i_0$ such that $\s_{i_0}(X)=s_{i_0}(X)\a_k$.
\end{enumerate}

As $s_{i_0}(X)\a_k=t_{i_0}(X)\in\Ss$, by the definition of Procedure III there exists an equation $x\leq\a_k\in\Ss$ for some $x$ occurring in $t_{i_0}(X)$.

As $s(X)$ is the product of $t_i(X)$, $s(X)$ contains $x$. Thus, $s(X)\leq\a_k\in[\Ss]$ that contradicts with the assumption.

Finally, we proved $S\models\Sigma$. By Theorem~\ref{th:main_irred}, $S$ is a coordinate semilattice of an irreducible algebraic set over $\F$. Thus, the solution set of $\Ss$ is irreducible.
\end{proof}

\subsection{Decomposition algorithm}

INPUT: a finite system $\Ss$.

\noindent OUTPUT: a set of systems $D$ such that
\begin{enumerate}
\item $D=\emptyset$, if $\Ss$ is inconsistent;

\item $D=\{\Ss_1,\Ss_2,\ldots,\Ss_m\}$ and
\[
\V_\F(\Ss)=\bigcup_{\Ss_i\in D}\V_\F(\Ss_i),
\]
where all $\Ss_i$ are consistent and have irreducible solution sets. 
\end{enumerate}

\noindent STAGE 1: apply Procedure I from Section~\ref{sec:consistency} for the system $\Ss$. If a subsystem $Sys=\Core(\Ss)$ is inconsistent, put $D=\emptyset$ and terminate the algorithm.

\noindent STAGE 2: apply Procedure II for the system $\Ss^\pr=\Ss\setminus\Core(\Ss)$. The procedure gets a set of systems $D^\pr$.

\noindent STAGE 3: for any $\Ss^\pr_i\in D^\pr$ launch Procedure III which gives a set of systems $D_i^\pr$.

\noindent STAGE 4: Put $D=\bigcup_{i}D^\pr_i$ and terminate the algorithm. 

\noindent STAGE 5: Without loss of generality one can put that $\Core(\Ss)$ contains the variables $x_1,x_2,\ldots,x_m$. Solve $\Core(\Ss)$ and obtain the set of points
\[
P_j=(p_{1j},p_{2j},\ldots,p_{nj}),\; 1\leq j\leq m.
\]

Replace any system $\Ss^\pr\in D$ to the collection of systems
\[
\Ss_j^\pr=\Ss^\pr\cup\{x_1=p_{1j},x_2=p_{2j},\ldots,x_m=p_{jm}\}.
\]

\begin{theorem}
\label{th:coorect_decomp_alg}
The algorithm above check the inconsistency of a given system $\Ss$. If $Y=\V_\F(\Ss)\neq \emptyset$, it finds the decomposition of the solution set $Y=\V_\F(\Ss)$ into a union of irreducible algebraic sets $\{\V_\F(\Ss_i)|\Ss_i\in D\}$.
\end{theorem}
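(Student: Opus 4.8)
The plan is to walk through the algorithm stage by stage and check three things: (i) it terminates; (ii) STAGE~1 decides whether $\Ss$ is consistent; and (iii) when $\Ss$ is consistent the output $D$ is a decomposition of $\V_\F(\Ss)$ into non-empty irreducible algebraic sets. Two facts do most of the work. First, Theorem~\ref{th:consistency}: $\Core(\Ss)$ has a finite, algorithmically computable solution set, $\Ss$ is consistent iff $\Core(\Ss)$ is, and $\V_\F(\Ss)$ is the union~(\ref{eq:union_of_S_without_core}) obtained by appending each solution of $\Core(\Ss)$ to $\Ss_0=\Ss\setminus\Core(\Ss)$. Second, $\F\models\Sigma$ (Lemma~\ref{l:formulas_hold_in_F}): consequently the rewriting done by Procedures~II and~III is solution-set preserving over $\F$ — formula $\varphi_\a$ validates the replacement in Procedure~II of $\s(X)\a=\t(X)\a$ by the union $\bigcup_{\a^\pr,\a^{\pr\pr}}\s(X)\a^\pr=\t(X)\a^{\pr\pr}$, and formula $\psi_\ell$ validates the passage in Procedure~III from $\Ss^\pr$ to $\bigcup_{\al}(\Ss^\pr\cup\Ss_\al)$ (the reverse inclusion is trivial in each case).

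For (ii): by Theorem~\ref{th:consistency}(1) the consistency of $\Core(\Ss)$ is decidable, and by Theorem~\ref{th:consistency}(2) it is equivalent to that of $\Ss$, so STAGE~1 returns $D=\emptyset$ exactly when $\Ss$ is inconsistent. For (i): Procedure~I terminates because $C$ strictly grows inside a fixed finite set of variables; Procedure~III performs exactly $N$ steps; Procedure~II terminates because each step deletes from the set of equations common to \emph{all} current systems one equation whose two sides share a non-trivial constant, while the equations it introduces have co-prime side-constants — once the residual constants of $\s(X),\t(X)$ are required co-prime, the removed common constant $\a$ is forced to be the supremum in $\F^\ast$ of the two side-constants, so the new side-constants are co-prime — and such an equation is thus never re-created.

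For (iii), read the last stages in the order that makes the input hypotheses of Procedures~II and~III available: after STAGE~1, solve $\Core(\Ss)$ (STAGE~5) to obtain its solutions $P_1,\dots,P_r$, and for each $j$ form $\bar\Ss^{(j)}$ by substituting the core variables of $\Ss_0$ by their values in $P_j$, then run Procedure~II and then Procedure~III on $\bar\Ss^{(j)}$. The key observation is that $\bar\Ss^{(j)}$ contains no equation of the form $\s(X)=\b$: $\Ss_0$ contains none (Procedure~I absorbs all such equations into $\Core(\Ss)$ at its first step, so $\Core(\bar\Ss^{(j)})=\emptyset$ too), and replacing core variables by constants leaves a variable on each side of every equation of $\Ss_0$; in particular every $\bar\Ss^{(j)}$ is consistent (set each remaining variable equal to the product of all constants occurring). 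Hence Procedure~II is legal input, it preserves unions, and its outputs contain neither $\s(X)=\b$ nor $\s(X)\b=\t(X)\b$ with $\b\ne 1$, so they are legal input to Procedure~III; Procedure~III preserves unions and, by Lemma~\ref{l:about_Procedure_III}, each system it outputs has a non-empty irreducible solution set. Now re-append the equalities $x_k=p_{kj}$ to each such system $\Ss^{\pr\pr}$: since the core variables are disjoint from the variables of $\Ss^{\pr\pr}$, this turns $\V_\F(\Ss^{\pr\pr})$ into $\{P_j\}\times\V_\F(\Ss^{\pr\pr})$, whose coordinate $\F$-semilattice is $\F$-isomorphic to that of $\V_\F(\Ss^{\pr\pr})$ (the core coordinates become constants from the embedded copy of $\F$), hence still non-empty and irreducible; and since intersecting with $\V_\F(\{x_k=p_{kj}\})$ commutes with the unions produced by Procedures~II and~III, the pieces coming from $P_j$ have union $\V_\F(\Ss_0\cup\{x_k=p_{kj}\})$. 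Taking the union over $j$ and applying Theorem~\ref{th:consistency}(3) yields $\V_\F(\Ss)=\bigcup_{\Ss_i\in D}\V_\F(\Ss_i)$ with all $\Ss_i$ consistent and all $\V_\F(\Ss_i)$ irreducible.

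The delicate point I expect to dwell on is exactly this interaction of STAGE~5 with Procedures~II and~III. Slicing an irreducible algebraic set over $\F$ by an equality $x=\b$ need \emph{not} yield an irreducible set — e.g. $\V_\F(z=xy)$ intersected with $z=\a_1\a_2$ is a finite set of several points, hence reducible — so the solutions of $\Core(\Ss)$ must be substituted into $\Ss_0$ \emph{before}, not after, Procedures~II and~III are applied; one must then verify that this substitution leaves the input hypotheses of both procedures intact and re-creates no ``core'' structure (so that Procedure~I need not be iterated). The only other spot needing care is the termination of Procedure~II, via the observation above that the common constant removed in a step is the supremum of the two side-constants.
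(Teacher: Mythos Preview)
Your argument is correct and uses the same two ingredients the paper invokes — Theorem~\ref{th:consistency} for the consistency test and Lemma~\ref{l:about_Procedure_III} for the irreducibility of the pieces — but is considerably more careful than the paper's two-sentence proof, which simply cites those results and stops. In particular, your discussion of the interaction between STAGE~5 and Procedures~II and~III is a genuine point the paper does not address: with the stages run in the order listed (Procedures~II and~III applied to $\Ss_0=\Ss\setminus\Core(\Ss)$ while the core variables are still present as variables, and the equalities $x_k=p_{kj}$ appended only afterward), the final systems can have reducible solution sets. Your example is apt; concretely, $\Ss=\{x_1=\a_1\a_2,\ x_2x_3=x_1x_4\}$ already exhibits the failure, since Procedures~II and~III do nothing to $\Ss_0=\{x_2x_3=x_1x_4\}$ and after STAGE~5 the single output system $\{x_2x_3=x_1x_4,\ x_1=\a_1\a_2\}$ has a coordinate semilattice violating $\psi_1$. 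Your reordering --- substitute each solution of $\Core(\Ss)$ into $\Ss_0$ first, then run Procedures~II and~III on the resulting systems in the non-core variables, then re-adjoin the equalities $x_k=p_{kj}$ --- is precisely the right fix, and your verification that this substitution cannot create an equation of the form $\s(X)=\b$ (because every equation of $\Ss_0$ carries a non-core variable on each side, else Procedure~I would have absorbed it into $\Core(\Ss)$) is exactly the check needed to keep the hypotheses of Procedures~II and~III, and hence of Lemma~\ref{l:about_Procedure_III}, intact.
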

\begin{proof}
Following Theorem~\ref{th:consistency}, for the inconsistency of a system $\Ss$ it is sufficient to check $\Core(\Ss)=\emptyset$ that was made by the algorithm at the Stage 1. 

The application of Lemma~\ref{l:about_Procedure_III} concludes the proof. 
\end{proof}

Using the decomposition algorithm, one can apply the formula~(\ref{eq:decompos_of_irred_syst_with_free_gener}) for any algebraic set not merely for irreducible one. Precisely, the following theorem holds.

\begin{theorem}
\label{th:parameterization_of_arbitrary_set}
Let $Y\subseteq\F^n$ be an algebraic set which is a union of $k$ irreducible sets. Hence, there exist a set of variables $T=\{t_1,t_2,\ldots,t_m\}$, coefficient-free terms $\{w_i(T)|1\leq i\leq n\}$ and constants $\{\b_{ij}|1\leq i\leq n,1\leq j\leq k\}$, $\b_{ij}\in\F^\ast$ such that
\begin{equation}
Y=\bigcup_{j=1}^k\{(w_1(T)\b_{1j},w_2(T)\b_{2j},\ldots,w_n(T)\b_{nj})|t_i\in\F\}.
\label{eq:parametrization_of_arbitrary_algebraic_set}
\end{equation}
\end{theorem}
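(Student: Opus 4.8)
The plan is to prove Theorem~\ref{th:parameterization_of_arbitrary_set} by reducing it to the irreducible case (formula~(\ref{eq:decompos_of_irred_syst_with_free_gener})) via the decomposition algorithm, and then reconciling the different coefficient-free terms produced for the various irreducible pieces into a single common family $\{w_i(T)\}$. First I would take a finite system $\Ss$ with $\V_\F(\Ss)=Y$ and run the Decomposition algorithm on it. If $Y\ne\emptyset$, by Theorem~\ref{th:coorect_decomp_alg} we obtain a set $D=\{\Ss^{(1)},\dots,\Ss^{(k')}\}$ of consistent systems with irreducible solution sets and $Y=\bigcup_j\V_\F(\Ss^{(j)})$. (A priori $k'$ may exceed $k$, but that only adds redundant pieces to the union, which is harmless; alternatively one discards those $\Ss^{(j)}$ whose solution set is contained in the union of the others, leaving exactly $k$ pieces.) For each $j$, Theorem~(\ref{eq:decompos_of_irred_syst_with_free_gener}) gives coefficient-free terms $w_1^{(j)}(T_j),\dots,w_n^{(j)}(T_j)$ in a finite variable set $T_j$ and constants $\b_{1j},\dots,\b_{nj}\in\F^\ast$ with $\V_\F(\Ss^{(j)})=\{(w_1^{(j)}(T_j)\b_{1j},\dots,w_n^{(j)}(T_j)\b_{nj})\mid t\in\F\}$.

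The key structural observation is that the coefficient-free terms depend only on $\mathrm{cf}(\Ss^{(j)})$, by Corollary~\ref{cor:same_cf(S)}. So the plan is to arrange, through the algorithm, that all $\Ss^{(j)}$ share the same coefficient-free part, after which a common $\{w_i(T)\}$ exists. Here I would exploit the structure of Procedures II and III: Procedure II only alters which constant sits on each side of an equation (replacing $\s(X)\a=\t(X)\a$ by $\s(X)\a^\pr=\t(X)\a^{\pr\pr}$), and Procedure III only appends inequalities of the form $x_i\le\a_j$, which erase to trivial equalities. Hence passing to the core-free system and running Stages 2--4 does not change $\mathrm{cf}$ of the surviving systems in an essential way: more precisely, all systems in the output $D$ have the \emph{same} coefficient-free part, namely $\mathrm{cf}(\Ss\setminus\Core(\Ss))$. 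Therefore Corollary~\ref{cor:same_cf(S)} applies simultaneously to every $\Ss^{(j)}$, yielding one family $w_1(T),\dots,w_n(T)$ in a common $T$ that works for all $j$; only the constants $\b_{ij}$ vary with $j$. This gives formula~(\ref{eq:parametrization_of_arbitrary_algebraic_set}). (Stage~5 then reintroduces the fixed variables from the core as specific constants $p_{ij}$, which are just a special case of the $\b_{ij}$ since a fixed coordinate $x_i=p_{ij}$ corresponds to the trivial term $w_i=1$ with constant $p_{ij}$; this is why the algorithm's Stage~5 is consistent with the stated form.)

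The main obstacle I expect is precisely the bookkeeping in the previous paragraph: showing rigorously that all systems in $D$ carry a common coefficient-free part, and handling the interaction between the core and the rest. One subtlety is that the variables of $\Core(\Ss)$ get frozen to constants, so the terms $w_i$ for those indices are trivial; one must check that the remaining variables' terms, coming from $\mathrm{cf}(\Ss\setminus\Core(\Ss))$, are genuinely independent of which branch of Procedures II/III one is in — this is exactly what Corollary~\ref{cor:same_cf(S)} buys us, since the $\mathrm{cf}$ of every branch coincides. A second point to verify is that the $T_j$ can be taken equal: the embedding of the coordinate semilattice of $\mathrm{cf}(\Ss\setminus\Core(\Ss))$ into $F(T)$ from the proof of Theorem~\ref{th:main_irred} depends only on that fixed finite semilattice, so a single $T$ and single tuple $(w_1,\dots,w_n)$ serve all branches. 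Once these two facts are in place, the theorem follows by collecting the branches and reading off the constants $\b_{ij}$ from each.
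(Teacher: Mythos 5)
Your proposal is correct and follows essentially the same route as the paper: run the Decomposition algorithm, observe that all resulting systems share the same coefficient-free part (Procedure II only reshuffles constants, Procedure III only adds inequalities $x_i\leq\a_j$, Stage 5 only freezes core variables to constants), and then invoke Corollary~\ref{cor:same_cf(S)} to get a single family $w_1(T),\ldots,w_n(T)$ with only the constants $\b_{ij}$ varying between branches. Your extra bookkeeping (the common variable set $T$, the treatment of frozen core coordinates, and discarding redundant branches) only makes explicit what the paper leaves implicit.
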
 
\begin{proof}
Let $D=\{\Ss_1,\Ss_2,\ldots,\Ss_k\}$ be a set of system obtained by Decomposition algorithm.
It is easy to see that the system of $D$ differ from each other by equations of the form $x_i\leq\c_i$ or $x_i=\d_i$. Thus, the systems $\Ss_j$ have the common part $\Ss_0$ and any $\Ss_j$ can be written in the form
\begin{equation*}
\Ss_j=\Ss_0\bigcup_{i=1}^n{x_i\#\c_{ij}},
\label{eq:camel}
\end{equation*}
where $\#$ is either $\leq$ or $=$. 

By~(\ref{eq:camel}), we have $\mathrm{cf}(\Ss_1)=\mathrm{cf}(\Ss_2)=\ldots=\mathrm{cf}(\Ss_k)$. According Corollary~\ref{cor:same_cf(S)} the representations~(\ref{eq:decompos_of_irred_syst_with_free_gener}) of all $\V_\F(\Ss_j)$ have the same terms $w_1(T),w_2(T),\ldots,w_n(T)$ and we come to~(\ref{eq:parametrization_of_arbitrary_algebraic_set}).
\end{proof}

\begin{example}
\label{ex:last}
Let us explain the formula~(\ref{eq:parametrization_of_arbitrary_algebraic_set}) at the system $\Ss$ from Example~\ref{ex:for_procedure_III}.

Let $\Ss^\pr$ be the system obtained from $\Ss$ by deleting all constants and renaming the variables:
\[
\Ss^\pr=\{z_2=z_1z_3,z_2z_4=z_3\}.
\]

One can introduce free generators $T=\{t_1,t_2\}$ and prove that the solution set of $\Ss^\pr$ is
\[
\V_\F(\Ss^\pr)=\{(t_1,t_1t_2,t_1t_2,t_2)|t_i\in\F\}.
\]

Thus, the terms $w_i(T)$ from formula~(\ref{eq:parametrization_of_arbitrary_algebraic_set}) are $w_1(T)=t_1$, $w_2(T)=t_1t_2$, $w_3(T)=t_1t_2$, $w_4(T)=t_2$.

The constants $\b_{ij}$ ($1\leq i\leq 4$, $1\leq j\leq 8$) are obtained from the inequalities of $j$-th system from $D$. Let us write $\b_{ij}$ as matrix elements
\[
\begin{pmatrix}
\a_1&\a_1&\a_1&\a_1&1&1&1&1\\
\a_2\a_4\a_5&\a_2\a_4&\a_2\a_5&\a_2&\a_2\a_4\a_5&\a_2\a_4&\a_2\a_5&\a_2\a_5\\
\a_3&\a_3&\a_3&\a_3&\a_1\a_3&\a_1\a_3&\a_1\a_3&\a_1\a_3\\
1&\a_5&\a_4&\a_4\a_5&1&\a_5&\a_4&\a_4\a_5\\
\end{pmatrix}
\]

Thus, the set $Y=\V_\F(\Ss)$ is the union
\begin{multline*}
Y=\{(t_1\a_1,t_1t_2\a_2\a_4\a_5,t_1t_2,t_2)\}\cup\{(t_1\a_1,t_1t_2\a_2\a_4,t_1t_2\a_3,t_2\a_5)\}\cup\\
\{(t_1\a_1,t_1t_2\a_2\a_5,t_1t_2\a_3,t_2\a_4)\}\cup\{(t_1\a_1,t_1t_2\a_2,t_1t_2\a_3,t_2\a_4\a_5)\}\cup\\
\{(t_1,t_1t_2\a_2\a_4\a_5,t_1t_2\a_1\a_3,t_2)\}\cup\{(t_1,t_1t_2\a_2\a_4,t_1t_2\a_1\a_3,t_2\a_5)\}\cup\\
\{(t_1,t_1t_2\a_2\a_5,t_1t_2\a_1\a_3,t_2\a_4)\}\cup\{(t_1,t_1t_2\a_2\a_5,t_1t_2\a_1\a_3,t_2\a_4\a_5)\},
\end{multline*}
where $t_i\in\F$.
\end{example}

The information of the author:

Artem N. Shevlyakov

Omsk Branch of Institute of Mathematics, Siberian Branch of the Russian Academy of Sciences

644099 Russia, Omsk, Pevtsova st. 13

Phone: +7-3812-23-25-51.

e-mail: \texttt{a\_shevl@mail.ru}
\end{document}